\theoremstyle{plain}
\newtheorem{theorem}{Theorem}[section]
\newtheorem{corollary}[theorem]{Corollary}
\newtheorem{lemma}[theorem]{Lemma}
\newtheorem{proposition}[theorem]{Proposition}
\newtheorem{construction}[theorem]{Construction}
\theoremstyle{definition}
\numberwithin{equation}{section}
\def\End{\mathrm{End}}
\def\Aut{\mathrm{Aut}}
\def\Mlt{\mathrm{Mlt}}
\def\Inn{\mathrm{Inn}}
\def\End{\mathrm{End}}
\def\Asc{\mathrm{Asc}}
\def\F{\mathbb F}
\def\GL{\mathrm{GL}}
\def\im{\mathrm{Im}}
\def\tr{\mathrm{tr}}
\def\det{\mathrm{det}}
\def\chr#1{\mathrm{char}(#1)}
\begin{document}

\title{Automorphic loops arising from module endomorphisms}

\author{Alexandr Grishkov}

\email[Grishkov]{grishkov@ime.usp.br}

\address[Grishkov]{Institute of Mathematics and Statistics, University of S\~{a}o Paulo, 05508-090 S\~{a}o Paulo, SP, Brazil}

\author{Marina Rasskazova}

\email[Rasskazova]{marinarasskazova1@gmail.com}

\address[Rasskazova]{Omsk Service Institute, 644099 Omsk, Russia}

\author{Petr Vojt\v{e}chovsk\'y}

\email[Vojt\v{e}chovsk\'y]{petr@math.du.edu}

\address[Vojt\v{e}chovsk\'y]{Department of Mathematics, University of Denver, 2280 S Vine St, Denver, Colorado 80208, U.S.A.}

\begin{abstract}A loop is automorphic if all its inner mappings are automorphisms. We construct a large family of automorphic loops as follows. Let $R$ be a commutative ring, $V$ an $R$-module, $E=\End_R(V)$ the ring of $R$-endomorphisms of $V$, and $W$ a subgroup of $(E,+)$ such that $ab=ba$ for every $a$, $b\in W$ and $1+a$ is invertible for every $a\in W$. Then $Q_{R,V}(W)$ defined on $W\times V$ by
\begin{displaymath}
    (a,u)(b,v) = (a+b,u(1+b)+v(1-a))
\end{displaymath}
is an automorphic loop.

A special case occurs when $R=k<K=V$ is a field extension and $W$ is a $k$-subspace of $K$ such that $k1\cap W = 0$, naturally embedded into $\End_k(K)$ by $a\mapsto M_a$, $bM_a = ba$. In this case we denote the automorphic loop $Q_{R,V}(W)$ by $Q_{k<K}(W)$.

We call the parameters tame if $k$ is a prime field, $W$ generates $K$ as a field over $k$, and $K$ is perfect when $\chr{k}=2$. We describe the automorphism groups of tame automorphic loops $Q_{k<K}(W)$, and we solve the isomorphism problem for tame automorphic loops $Q_{k<K}(W)$. A special case solves a problem about automorphic loops of order $p^3$ posed by Jedli\v{c}ka, Kinyon and Vojt\v{e}chovsk\'y.

We conclude the paper with a construction of an infinite $2$-generated abelian-by-cyclic automorphic loop of prime exponent.
\end{abstract}

\thanks{Research partially supported by the Simons Foundation Collaboration Grant 210176 to Petr Vojt\v{e}chovsk\'y.}

\subjclass[2010]{Primary: 20N05. Secondary: 11R11, 12F99.}

\keywords{Automorphic loop, automorphic loop of order $p^3$, automorphism group, semidirect product, field extension, quadratic extension, module, module endomorphism.}

\maketitle

\section{Introduction}

A groupoid $Q$ is a \emph{quasigroup} if for all $x\in Q$ the translations $L_x:Q\to Q$, $R_x:Q\to Q$ defined by $yL_x = xy$, $yR_x = yx$ are bijections of $Q$. A quasigroup $Q$ is a \emph{loop} if there is $1\in Q$ such that $1x=x1=x$ for every $x\in Q$.

Let $Q$ be a loop. The \emph{multiplication group} of $Q$ is the permutation group $\Mlt(Q) = \langle L_x,\,R_x:x\in Q\rangle$, and the \emph{inner mapping group} of $Q$ is the subgroup $\Inn(Q) = \{\varphi\in \Mlt(Q):1\varphi=1\}$.

A loop $Q$ is said to be \emph{automorphic} if $\Inn(Q)\le \Aut(Q)$, that is, if every inner mapping of $Q$ is an automorphism of $Q$. Since, by a result of Bruck \cite{Br}, $\Inn(Q)$ is generated by the bijections
\begin{displaymath}
    T_x = R_x L_x^{-1},\quad L_{x,y} = L_x L_y L_{yx}^{-1},\quad R_{x,y} = R_x R_y R_{xy}^{-1},
\end{displaymath}
a loop $Q$ is automorphic if and only $T_x$, $L_{x,y}$, $R_{x,y}$ are homomorphisms of $Q$ for every $x$, $y\in Q$. In fact, by \cite[Theorem 7.1]{JoKiNaVo}, a loop $Q$ is automorphic if and only if every $T_x$ and $R_{x,y}$ are automorphisms of $Q$. The variety of automorphic loops properly contains the variety of groups.

See \cite{Br} or \cite{Pf} for an introduction to loop theory. The first paper on automorphic loops is \cite{BrPa}. It was shown in \cite{BrPa} that automorphic loops are power-associative, that is, every element of an automorphic loop generates an associative subloop. Many structural results on automorphic loops were obtained in \cite{KiKuPhVo}, where an extensive list of references can be found.

\subsection{The general construction}

In this paper we study the following construction.

\begin{construction}\label{Cn:Main}
Let $R$ be a commutative ring, $V$ an $R$-module and $E=\End_R(V)$ the ring of $R$-endomorphisms of $V$. Let $W$ be a subgroup of $(E,+)$ such that
\begin{enumerate}
\item[(A1)] $ab=ba$ for every $a$, $b\in W$, and
\item[(A2)] $1+a$ is invertible for every $a\in W$,
\end{enumerate}
where $1\in E$ is the identity endomorphism on $V$.

Define $Q_{R,V}(W)$ on $W\times V$ by
\begin{equation}\label{Eq:Mult}
    (a,u)(b,v) = (a+b, u(1+b)+v(1-a)).
\end{equation}
\end{construction}

We show in Theorem \ref{Th:A} that $Q_{R,V}(W)$ is always an automorphic loop.

\medskip

Two special cases of this construction appeared in the literature. First, in \cite{JeKiVoNilp}, the authors proved that commutative automorphic loops of odd prime power order are centrally nilpotent, and constructed a family of (noncommutative) automorphic loops of order $p^3$ with trivial center by using the following construction.

\begin{construction}\label{Cn:JeKiVo}
Let $k$ be a field and $M_2(k)$ the vector space of $2\times 2$ matrices over $k$ equipped with the determinant norm. Let $I$ be the identity matrix, and let $A\in M_2(k)$ be such that $kI\oplus kA$ is an anisotropic plane in $M_2(k)$, that is, $\det(aI+bA)\ne 0$ for every $(a,b)\ne (0,0)$. Define $Q_k(A)$ on $k\times(k\times k)$ by $(a,u)(b,v) = (a+b,u(I+bA)+v(I-aA))$.
\end{construction}

We will show in Section \ref{Sc:p3} that the loops $Q_k(A)$ are a special case of the construction $Q_{R,V}(W)$ and hence automorphic. If $k=\F_p$ then $Q_k(A)$ has order $p^3$, exponent $p$ and trivial center, by \cite[Proposition 5.6]{JeKiVoNilp}.

\medskip

Second, in \cite{Na}, Nagy used a construction of automorphic loops based on Lie rings (cf. \cite{KiGrNa} and \cite{KiKuPhVo}) and arrived at the following.

\begin{construction}\label{Cn:Nagy}
Let $V$, $W$ be vector spaces over $\F_2$, and let $\beta:W\to\End(V)$ be a linear map such that $a\beta b\beta = b\beta a\beta$ for every $a$, $b\in W$, and $1+a\beta$ is invertible for every $a\in W$. Define a loop $(W\times V,*)$ by $(a,u)*(b,v) = (a+b,u(1+b\beta)+v(1+a\beta))$.
\end{construction}

When $\beta$ is injective, Construction \ref{Cn:Nagy} is a special case of our Construction \ref{Cn:Main}, and when $\beta$ is not injective, it is a slight variation. By \cite[Proposition 3.2]{Na}, $(W\times V,*)$ is an automorphic loop of exponent $2$ and, moreover, if $\beta$ is injective and at least one $a\beta$ is invertible then $(W\times V,*)$ has trivial center.

\subsection{The field extension construction}

Most of this paper is devoted to the following special case of Construction \ref{Cn:Main}.

\begin{construction}\label{Cn:Extension}
Let $R=k<K=V$ be a field extension, and let $W$ be a $k$-subspace of $V$ such that $k1\cap W = 0$. Embed $W$ into $\End_k(K)$ via $a\mapsto M_a$, $bM_a=ba$. Denote by $Q_{k<K}(W)$ the loop $Q_{R,V}(W)$ of Construction \ref{Cn:Main}.
\end{construction}

Assuming the situation of Construction \ref{Cn:Extension}, the condition (A1) of Construction \ref{Cn:Main} is obviously satisfied because the multiplication in $K$ is commutative and associative. Moreover, $k1\cap W=0$ is equivalent to $1+a\ne 0$ for all $a\in W$, which is equivalent to (A2). Construction \ref{Cn:Main} therefore applies and $Q_{k<K}(W)$ is an automorphic loop.

For the purposes of this paper, we call the parameters $k$, $K$, $W$ of Construction \ref{Cn:Extension} \emph{tame} if $k$ is a prime field, $W$ generates $K$ as a field over $k$, and $K$ is perfect when $\chr{k}=2$.

In Corollary \ref{Cr:Iso} we solve the isomorphism problem for tame automorphic loops $Q_{k<K}(W)$, given a fixed extension $k<K$, and in Theorem \ref{Th:Aut} we describe the automorphism groups of tame automorphic loops $Q_{k<K}(W)$. In particular, we solve the isomorphism problem when $k$ is a finite prime field and $K$ is a quadratic extension of $k$. This answers a problem about automorphic loops of order $p^3$ posed in \cite{JeKiVoNilp}, and it disproves \cite[Conjecture 6.5]{JeKiVoNilp}.

Finally, in Section \ref{Sc:Example} we use the construction $Q_{k<K}(W)$ to obtain an infinite $2$-generated abelian-by-cyclic automorphic loop of prime exponent.

\section{Automorphic loops from module endomorphisms}

Throughout this section, assume that $R$ is a commutative ring, $V$ an $R$-module, $W$ a subgroup of $E=(\End_R(V),+)$ satisfying (A1) and (A2), and $Q_{R,V}(W)$ is defined on $W\times V$ by \eqref{Eq:Mult} as in Construction \ref{Cn:Main}.

It is easy to see that $(0,0)=(0_E,0_V)$ is the identity element of $Q_{R,V}(W)$, and that $(a,u)\in Q_{R,V}(W)$ has the two-sided inverse $(-a,-u)$.

Using the notation
\begin{displaymath}
    I_a = 1+a\text{ and }J_a = 1-a,
\end{displaymath}
we can rewrite the multiplication formula \eqref{Eq:Mult} as
\begin{displaymath}
    (a,u)(b,v) = (a+b, uI_b+vJ_a).
\end{displaymath}
A straightforward calculation then shows that the left and right translations $L_{(a,u)}$, $R_{(a,u)}$ in $Q_{R,V}(W)$ are invertible, with their inverses given by
\begin{align}
        (a,u)\backslash (b,v) = (b,v)L_{(a,u)}^{-1} &= (b-a, (v-uI_{b-a})J_a^{-1}),\label{Eq:LeftDivision}\\
        (b,v)/(a,u) = (b,v)R_{(a,u)}^{-1} &= (b-a, (v-uJ_{b-a})I_a^{-1}),\label{Eq:RightDivision}
\end{align}
respectively. Hence $Q_{R,V}(W)$ is a loop.

The multiplication formula \eqref{Eq:Mult} yields $(a,0)(b,0) = (a+b,0)$ and $(0,u)(0,v)=(0,u+v)$, so $W\times 0$ is a subloop of $Q_{R,V}(W)$ isomorphic to the abelian group $(W,+)$ and $0\times V$ is a subloop of $Q_{R,V}(W)$ isomorphic to the abelian group $(V,+)$. Moreover, the mapping $Q_{R,V}(W) = W\times V \to W$ defined by $(a,u)\mapsto a$ is a homomorphism with kernel $0\times V$. Thus $0\times V$ is a normal subloop of $Q_{R,V}(W)$.

We proceed to show that $Q_{R,V}(W)$ is an automorphic loop.

Let $C_E(W) = \{a\in E : ab=ba$ for every $b\in W\}$.

\begin{lemma}\label{Lm:ParamAut}
For $d\in C_E(W)^*$ and $x\in V$ define $f_{(d,x)}:Q_{R,V}(W)\to Q_{R,V}(W)$ by
\begin{displaymath}
    (a,u)f_{(d,x)} = (a,xa+ud).
\end{displaymath}
Then $f_{(d,x)}\in \Aut(Q_{R,V}(W))$.
\end{lemma}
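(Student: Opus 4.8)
The plan is to verify directly that $f_{(d,x)}$ is a bijection and that it preserves the multiplication \eqref{Eq:Mult}. Bijectivity is easy: since $d\in C_E(W)^*$ is invertible on $V$, the map $(a,u)\mapsto(a,xa+ud)$ has the obvious two-sided inverse $(a,w)\mapsto(a,(w-xa)d^{-1})$, so $f_{(d,x)}$ is a bijection of $W\times V$. It clearly fixes $(0,0)$, so the only real content is the homomorphism property.

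\medskip

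For the homomorphism property I would compute both sides of
\begin{displaymath}
    \bigl((a,u)(b,v)\bigr)f_{(d,x)} = (a,u)f_{(d,x)}\cdot (b,v)f_{(d,x)}.
\end{displaymath}
The left-hand side is $(a+b,uI_b+vJ_a)f_{(d,x)} = \bigl(a+b,\ x(a+b)+(uI_b+vJ_a)d\bigr)$. The right-hand side is $(a,xa+ud)(b,xb+vd) = \bigl(a+b,\ (xa+ud)I_b+(xb+vd)J_a\bigr)$. So the first coordinates agree automatically, and I must check the equality of second coordinates:
\begin{displaymath}
    x(a+b)+uI_bd+vJ_ad = (xa+ud)I_b+(xb+vd)J_a.
\end{displaymath}
Expanding the right side gives $xaI_b+udI_b+xbJ_a+vdJ_a$. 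Since $d$ commutes with every element of $W$ (and with $1$), $udI_b=uI_bd$ and $vdJ_a=vJ_ad$, so those terms match the corresponding terms on the left. What remains is the identity $x(a+b)=xaI_b+xbJ_a$, i.e.\ $xa+xb = xa(1+b)+xb(1-a) = xa+xab+xb-xba$. This holds precisely because $xab=xba$: the element $x$ is acted on by $a,b\in W$, and (A1) gives $ab=ba$ in $E$, hence $xab=xba$ in $V$. Thus the second coordinates agree and $f_{(d,x)}$ is an endomorphism, hence an automorphism.

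\medskip

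The only subtlety — and the one place to be careful — is keeping track of which side the endomorphisms act on: $V$ is a left $R$-module but $\End_R(V)$ acts on the right (as the notation $bM_a=ba$ and the formula $uI_b$ suggests), so compositions must be read in the correct order, and the commutativity hypotheses (A1) for $a,b\in W$ and $d\in C_E(W)$ are exactly what is needed to reorder the relevant products. No genuine obstacle is expected; this is a direct verification once the bookkeeping of side-of-action is set up correctly.
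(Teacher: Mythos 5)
Your proposal is correct and follows essentially the same route as the paper: a direct expansion of both sides of the homomorphism identity, using (A1) for $a,b\in W$ and the commutation of $d$ with $W$ to match terms (the paper checks injectivity and surjectivity separately, while you exhibit the explicit inverse $(a,w)\mapsto(a,(w-xa)d^{-1})$, which amounts to the same thing). No gaps.
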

\begin{proof}
We have $((a,u)(b,v))f_{(d,x)} = (a+b,uI_b+vJ_a)f_{(d,x)} = (a+b,x(a+b)+(uI_b+vJ_a)d)$, where the second coordinate is equal to $xa+xb+ud+ubd+vd-vad$. On the other hand, $(a,u)f_{(d,x)}\cdot (b,v)f_{(d,x)} = (a,xa+ud)(b,xb+vd) = (a+b, (xa+ud)I_b + (xb+vd)J_a)$, where the second coordinate is equal to $xa+xab+ud+udb+xb-xba+vd-vda$. Note that $ab=ba$ because $a$, $b\in W$, and $ad=da$, $bd=db$ because $d\in C_E(W)$. The mapping $f_{(d,x)}$ is therefore an endomorphism of $Q_{R,V}(W)$.

Suppose that $(a,u)f_{(d,x)} = (b,v)f_{(d,x)}$. Then $(a,xa+ud) = (b,xb+vd)$ implies $a=b$ and $ud=vd$. Since $d$ is invertible, we have $u=v$, proving that $f_{(d,x)}$ is one-to-one.

Given $(b,v)\in Q_{R,V}(W)$, we have $(a,u)f_{(d,x)} = (b,v)$ if and only if $(a,xa+ud)=(b,v)$. We can therefore take $a=b$ and $u = (v-xa)d^{-1}$ to see that $f_{(d,x)}$ is onto.
\end{proof}

\begin{theorem}\label{Th:A}
The loops $Q_{R,V}(W)$ obtained by Construction \ref{Cn:Main} are automorphic.
\end{theorem}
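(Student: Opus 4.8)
\textbf{Proof plan for Theorem \ref{Th:A}.}
The plan is to verify that $Q_{R,V}(W)$ is automorphic by checking that the generators $T_{(a,u)}$, $L_{(a,u),(b,v)}$, $R_{(a,u),(b,v)}$ of $\Inn(Q_{R,V}(W))$ are automorphisms; by \cite[Theorem 7.1]{JoKiNaVo} it actually suffices to handle $T_{(a,u)}$ and $R_{(a,u),(b,v)}$. First I would compute each inner mapping explicitly from the multiplication formula \eqref{Eq:Mult} and the division formulas \eqref{Eq:LeftDivision}, \eqref{Eq:RightDivision}. For the conjugation map, a direct computation gives $T_{(a,u)} = R_{(a,u)}L_{(a,u)}^{-1}$ sending $(b,v)$ to something of the form $(b, v' )$ where the second coordinate is an $R$-linear expression in $v$ (and affine in terms involving $a$, $u$); the key observation, using (A1) so that $a$ commutes with $b$, is that $T_{(a,u)}$ has exactly the shape $(b,v)\mapsto (b, xb + vd)$ for a suitable $d\in C_E(W)^*$ and $x\in V$ depending only on $(a,u)$ — indeed one expects $d = I_a J_a^{-1}$ (or $J_a^{-1}I_a$, which agree since $I_a$, $J_a$ are polynomials in $a$), which lies in $C_E(W)$ by (A1) and is invertible by (A2). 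Then Lemma \ref{Lm:ParamAut} immediately yields $T_{(a,u)}\in\Aut(Q_{R,V}(W))$.

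The same strategy applies to $R_{(a,u),(b,v)} = R_{(a,u)}R_{(b,v)}R_{(a,u)(b,v)}^{-1}$: compute its action on a general element $(c,w)$, simplify using (A1) (all of $a$, $b$, $c$ commute pairwise, and $I$'s and $J$'s are polynomials in these), and recognize the result as $f_{(d,x)}$ for an appropriate invertible $d\in C_E(W)$ and $x\in V$. Here one expects the first coordinate $c$ to be preserved (since projection to $W$ is a homomorphism, every inner mapping fixes the $W$-coordinate), and the second coordinate to come out as $xc + wd$ with $d$ a product/quotient of terms $I_{(\cdot)}$, $J_{(\cdot)}$ evaluated at $a$, $b$, $a+b$ — each a polynomial in commuting elements of $W$, hence in $C_E(W)$, hence invertible by (A2). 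Applying Lemma \ref{Lm:ParamAut} again finishes that case. Optionally one could also treat $L_{(a,u),(b,v)}$ in the same way for completeness, though it is not logically required.

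The main obstacle is purely computational bookkeeping: one must carry out the substitutions in \eqref{Eq:LeftDivision}–\eqref{Eq:RightDivision} carefully and repeatedly invoke (A1) to collapse products like $I_b J_a$, $J_{c-a}I_a^{-1}$, etc., into the canonical form $w\mapsto xc+wd$. The conceptual content is small — everything reduces to the fact that any rational expression in commuting elements $a,b\in W$ with denominators of the form $1\pm(\text{such element})$ lies in $C_E(W)^*$ — but getting the algebra to land exactly on the template of Lemma \ref{Lm:ParamAut}, with the correct $d$ and $x$, is where care is needed. Once the inner mappings are identified as instances of $f_{(d,x)}$, the theorem follows at once.
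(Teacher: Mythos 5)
Your plan is correct and follows essentially the same route as the paper: reduce via \cite[Theorem 7.1]{JoKiNaVo} to two families of inner mappings, compute them from \eqref{Eq:Mult}--\eqref{Eq:RightDivision}, and recognize each as an $f_{(d,x)}$ with $d\in C_E(W)^*$ (indeed $T_{(a,u)}$ gives $d=I_aJ_a^{-1}$ exactly as in the paper, and $R_{(a,u),(b,v)}$ works out to $d=I_aI_bI_{a+b}^{-1}$, $x=-vaI_{a+b}^{-1}$), then invoke Lemma \ref{Lm:ParamAut}. The only cosmetic difference is that the paper verifies $L_{(a,u),(b,v)}$ rather than $R_{(a,u),(b,v)}$, a mirror-image computation of no consequence.
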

\begin{proof}
We have already shown that $Q=Q_{R,V}(W)$ is a loop. In view of \cite[Theorem 7.1]{JoKiNaVo}, it suffices to show that for every $(a,u)$, $(b,v)\in Q$ the inner mappings $T_{(a,u)}$, $L_{(a,u),(b,v)}$ are automorphisms of $Q$. Using \eqref{Eq:LeftDivision}, we have
\begin{align*}
    (b,v)T_{(a,u)}&= (b,v)R_{(a,u)}L_{(a,u)}^{-1} = (b+a,vI_a+uJ_b)L_{(a,u)}^{-1}\\
        &= (b,(vI_a+uJ_b-uI_b)J_a^{-1}) = (b,u(J_b-I_b)J_a^{-1}+vI_aJ_a^{-1})\\
        &= (b, -2ubJ_a^{-1}+vI_aJ_a^{-1}) = (b, (-2uJ_a^{-1})b + v(I_aJ_a^{-1})),
\end{align*}
where we have also used $bJ_a^{-1}=J_a^{-1}b$. Thus $T_{(a,u)} = f_{(d,x)}$ with $d=I_aJ_a^{-1}$ and $x=-2uJ_a^{-1}\in V$. Note that $d\in C_E(W)^*$ by (A1), (A2). By Lemma \ref{Lm:ParamAut}, $T_{(a,u)}\in\Aut(Q)$.

Furthermore,
\begin{align*}
    (c,w)L_{(a,u),(b,v)} &= ((b,v)\cdot (a,u)(c,w))L_{(b,v)(a,u)}^{-1}\\
        &= ((b,v)(a+c,uI_c+wJ_a))L_{(b+a,vI_a+uJ_b)}^{-1}\\
        &= (b+a+c,vI_{a+c}+uI_cJ_b+wJ_aJ_b)L_{(b+a,vI_a+uJ_b)}^{-1}\\
        &= (c, (vI_{a+c}+uI_cJ_b+wJ_aJ_b-vI_aI_c-uJ_bI_c)J_{b+a}^{-1})\\
        &= (c, v(I_{a+c}-I_aI_c)J_{b+a}^{-1} + wJ_aJ_bJ_{b+a}^{-1})\\
        &= (c, - vacJ_{b+a}^{-1} + wJ_aJ_bJ_{b+a}^{-1}) = (c, (-vaJ_{b+a}^{-1})c + w(J_aJ_bJ_{b+a}^{-1})).
\end{align*}
Thus $L_{(a,u),(b,v)} = f_{(d,x)}$ with $d=J_aJ_bJ_{b+a}^{-1}\in C_E(W)^*$ and $x=-vaJ_{b+a}^{-1}\in V$. By Lemma \ref{Lm:ParamAut}, $L_{(a,u),(b,v)}\in\Aut(Q)$.
\end{proof}

For a loop $Q$, the \emph{associator subloop} $\Asc(Q)$ is the smallest normal subloop of $Q$ such that $Q/\Asc(Q)$ is a group. Given $x$, $y$, $z\in Q$, the \emph{associator} $[x,y,z]$ is the unique element of $Q$ such that $(xy)z = [x,y,z](x(yz))$, so
\begin{displaymath}
    [x,y,z] = ((xy)z)/(x(yz)) = ((xy)z)R_{x(yz)}^{-1}.
\end{displaymath}
It is easy to see that $\Asc(Q)$ is the smallest normal subloop of $Q$ containing all associators.

\begin{lemma}\label{Lm:Associator}
Let $Q=Q_{R,V}(W)$. Then
\begin{displaymath}
    [(a,u),(b,v),(c,w)] = (0,(ubc-wab)I_{a+b+c}^{-1})
\end{displaymath}
for every $(a,u)$, $(b,v)$, $(c,w)\in Q$. In particular, $\Asc(Q)\le 0\times V$.
\end{lemma}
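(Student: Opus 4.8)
The plan is to compute the two associands $\bigl((a,u)(b,v)\bigr)(c,w)$ and $(a,u)\bigl((b,v)(c,w)\bigr)$ directly from the multiplication formula \eqref{Eq:Mult}, and then extract the associator using the right-division formula \eqref{Eq:RightDivision}. Concretely, I would first expand
\begin{align*}
    \bigl((a,u)(b,v)\bigr)(c,w) &= (a+b+c,\ uI_bI_c + vJ_aI_c + wJ_{a+b}),\\
    (a,u)\bigl((b,v)(c,w)\bigr) &= (a+b+c,\ uI_{b+c} + vI_cJ_a + wJ_bJ_a),
\end{align*}
where the grouping of factors in the second coordinates uses that $a$, $b$, $c\in W$ commute pairwise by (A1), so that, for instance, $J_a$ and $I_c$ commute.

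Since the two associands share the first coordinate $a+b+c\in W$, and $I_{a+b+c}$ is invertible by (A2), the right-division formula \eqref{Eq:RightDivision} shows that the associator has first coordinate $0$ and second coordinate $(v'-u')I_{a+b+c}^{-1}$, where $v'$, $u'$ are the second coordinates displayed above (note $J_0=1$). The $v$-terms cancel because $J_aI_c = I_cJ_a$, leaving $v'-u' = u(I_bI_c - I_{b+c}) + w(J_{a+b} - J_bJ_a)$. The two short polynomial identities $I_bI_c - I_{b+c} = (1+b)(1+c)-(1+b+c) = bc$ and $J_{a+b}-J_bJ_a = (1-a-b)-(1-b)(1-a) = -ab$ then give $v'-u' = ubc - wab$, which is the claimed formula.

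For the last assertion, recall from the discussion preceding the lemma that $0\times V$ is a normal subloop of $Q$, and that by definition $\Asc(Q)$ is the smallest normal subloop of $Q$ containing all associators; since every associator lies in $0\times V$ by the formula just established, $\Asc(Q)\le 0\times V$.

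I do not expect a genuine obstacle here: the argument is a bounded bookkeeping computation resting on the two displayed identities, on (A1) both to force the $v$-terms to cancel and to pull $I_{a+b+c}^{-1}$ past the $W$-elements, and on (A2) to make sense of $I_{a+b+c}^{-1}$. The only thing to watch is the order of the endomorphism factors and invoking (A1) exactly where it is needed.
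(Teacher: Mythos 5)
Your proposal is correct and follows essentially the same route as the paper: expand both associands via \eqref{Eq:Mult}, apply the right-division formula \eqref{Eq:RightDivision} (i.e.\ $R_{x(yz)}^{-1}$), use (A1) to cancel the $v$-terms and simplify $I_bI_c-I_{b+c}=bc$, $J_{a+b}-J_aJ_b=-ab$, and conclude $\Asc(Q)\le 0\times V$ from the normality of $0\times V$. The only cosmetic point is that (A1) is not actually needed to move $I_{a+b+c}^{-1}$, since the division formula already places it on the right.
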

\begin{proof}
The associator $[(a,u),(b,v),(c,w)]$ is equal to
\begin{align*}
    ((a,u)(b,v)&\cdot(c,w))R_{(a,u)\cdot(b,v)(c,w)}^{-1}\\
    &= (a+b+c,(uI_b+vJ_a)I_c+wJ_{a+b}) R_{(a+b+c,uI_{b+c}+(vI_c+wJ_b)J_a)}^{-1}\\
    &= (0,(uI_bI_c+vJ_aI_c+wJ_{a+b}-uI_{b+c}-vI_cJ_a-wJ_bJ_a)I_{a+b+c}^{-1})\\
    &= (0,(ubc-wab)I_{a+b+c}^{-1}).
\end{align*}
Since $0\times V$ is a normal subloop of $Q$, we are done.
\end{proof}

\begin{corollary}\label{Cr:Group}
Let $Q=Q_{R,V}(W)$.
\begin{enumerate}
\item[(i)] $Q$ is a group if and only if $W^2 = \{ab:a,\,b\in W\}=0$.
\item[(ii)] If $VW^2=V$ then $\Asc(Q)=0\times V$.
\end{enumerate}
\end{corollary}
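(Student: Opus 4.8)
The plan is to read off both parts directly from the associator formula of Lemma~\ref{Lm:Associator}, i.e.\ from $[(a,u),(b,v),(c,w)] = (0,(ubc-wab)I_{a+b+c}^{-1})$, by substituting suitable arguments. Throughout I will use that a loop is a group if and only if all of its associators are trivial (equivalently, its associator subloop is trivial).

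For (i): if $W^2=0$, then $ubc=wab=0$ in $V$ for every $a,b,c\in W$ and $u,w\in V$, so by Lemma~\ref{Lm:Associator} every associator of $Q$ equals $(0,0)$, and hence $Q$ is a group. Conversely, suppose $Q$ is a group. Then in particular $[(a,0),(b,v),(c,w)] = (0,-w(ab)I_{a+b+c}^{-1}) = (0,0)$ for all $a,b,c\in W$ and $v,w\in V$. Since $I_{a+b+c}=1+(a+b+c)$ is invertible by (A2), this forces $w(ab)=0$ for every $w\in V$, whence $ab=0_E$; as $a,b\in W$ were arbitrary, $W^2=0$.

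For (ii): Lemma~\ref{Lm:Associator} already gives $\Asc(Q)\le 0\times V$, so, since $0\times V$ is (isomorphic to) the abelian group $(V,+)$, the subloop $\Asc(Q)$ is in fact a subgroup of $(V,+)$, and it therefore contains the subgroup generated by all associators. The key step is to choose arguments that simultaneously trivialize the factor $I_{a+b+c}^{-1}$ and the term $wab$: evaluating at $(-(a+b),v)$, $(a,0)$, $(b,0)$ --- which are legitimate elements of $Q$ because $W$ is a subgroup of $(E,+)$ --- we get $a+b+c=0$, hence $I_{a+b+c}^{-1}=1$, and the associator becomes $(0,v(ab))$. Thus $(0,v(ab))\in\Asc(Q)$ for all $v\in V$ and $a,b\in W$, so $\Asc(Q)$ contains the subgroup of $(V,+)$ generated by $\{v(ab):v\in V,\ a,b\in W\}$, which is exactly $VW^2$. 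If $VW^2=V$, then $0\times V\le\Asc(Q)$, and together with the reverse inclusion this yields $\Asc(Q)=0\times V$.

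Beyond spotting the substitution that forces $a+b+c=0$ in (ii), the argument is routine bookkeeping, so I do not expect a real obstacle. The points to keep straight are: a subloop of an abelian group is a subgroup; the subgroup of $(V,+)$ generated by the elements $v(ab)$ is automatically an $R$-submodule, since $r\cdot v(ab)=(rv)(ab)$, so the hypothesis $VW^2=V$ is unambiguous; and when computing the two associator evaluations one should match the roles of the three arguments $(a,u),(b,v),(c,w)$ against Lemma~\ref{Lm:Associator} carefully --- note in particular that the middle argument's vector component does not appear in the formula.
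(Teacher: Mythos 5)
Your proof is correct and follows essentially the same route as the paper: both parts are read off the associator formula of Lemma \ref{Lm:Associator} by suitable substitutions, with the paper using $w=0$ and $a=-(b+c)$ in both (i) and (ii). Your only (harmless) deviation is proving the converse of (i) by taking $u=0$ and invoking the invertibility of $I_{a+b+c}$ instead of also forcing $a+b+c=0$, which is a trivial variant of the same argument.
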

\begin{proof}
(i) It is clear that $Q$ is a group if and only if $\Asc(Q)=0$. Suppose that $Q$ is a group. Taking $w=0$ and $a=-(b+c)$ in Lemma \ref{Lm:Associator}, we get $[(a,u),(b,v),(c,w)] = (0,ubc)$, so $W^2=0$. Conversely, if $W^2=0$ then the formula of Lemma \ref{Lm:Associator} shows that every associator vanishes.

(ii) As above, with $w=0$ and $a=-(b+c)$ we get $[(a,u),(b,v),(c,w)] = (0,ubc)$. Since $VW^2=V$, we conclude that $0\times V\le\Asc(Q)$. The other inclusion follows from Lemma \ref{Lm:Associator}.
\end{proof}

\section{Automorphic loops from field extensions}

Throughout this section we will assume that $R=k<K=V$ is a field extension, $k$ embeds into $K$ via $\lambda\mapsto \lambda 1$, and $W$ is a $k$-subspace of $K$ such that $k1\cap W=0$, where we identify $a\in W$ with $M_a:K\to K$, $b\mapsto ba$. We write $M_W = \{M_a:a\in W\}$.

We have already pointed out in the introduction that (A1), (A2) are then satisfied, giving rise to the automorphic loop $Q_{k<K}(W)$ of Construction \ref{Cn:Extension}. Note that the multiplication formula \eqref{Eq:Mult} on $W\times K$ makes sense as written even with addition and multiplication from $K$.

\begin{corollary}\label{Cr:Associator}
Let $Q=Q_{k<K}(W)$ with $W\ne 0$. Then $\Asc(Q) = 0\times K$.
\end{corollary}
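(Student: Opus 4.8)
The plan is to apply Corollary \ref{Cr:Group}(ii), which says that $\Asc(Q) = 0\times V$ whenever $VW^2 = V$. In the present setting $V = K$ is a field and $W$ is a nonzero $k$-subspace, so $W^2 = \{ab : a, b\in W\}$ consists of products computed inside $K$. Since $W\ne 0$, pick any nonzero $a\in W$; then $a^2\in W^2$ is a nonzero element of the field $K$, hence invertible in $K$, so $K a^2 = K$. Therefore $KW^2 = K$ trivially, and Corollary \ref{Cr:Group}(ii) gives $\Asc(Q) = 0\times K$.

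The one thing to check is that the inclusion actually needs $W\ne 0$: if $W = 0$ then $Q$ is the abelian group $0\times K$ and $\Asc(Q) = 0$, so the hypothesis $W\ne 0$ is genuinely used. Also note that we are implicitly identifying the endomorphism $M_a$ with the field element $a$, so that ``$W^2 = 0$'' in the sense of Corollary \ref{Cr:Group} means $ab = 0$ in $K$ for all $a,b\in W$; since $K$ has no zero divisors, $W\ne 0$ forces $W^2\ne 0$, consistent with the above.

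I expect no real obstacle here: the whole point is that in a field every nonzero element is invertible, so the module-theoretic condition $VW^2 = V$ from Corollary \ref{Cr:Group}(ii) is automatic as soon as $W^2\ne 0$. The proof is therefore a two-line reduction to the already-established corollary. If one wanted a self-contained argument instead, one could alternatively invoke Lemma \ref{Lm:Associator} directly: taking $w = 0$ and $a = -(b+c)$ with $b = c$ a fixed nonzero element of $W$ gives the associator $(0, ub^2 I_{-b}^{-1})$, and as $u$ ranges over $K$ the second coordinate $ub^2 I_{-b}^{-1}$ ranges over all of $K$ (since $b^2$ and $I_{-b} = 1-b$ are nonzero field elements, note $1-b\ne 0$ because $k1\cap W = 0$), so $0\times K\subseteq \Asc(Q)$; the reverse inclusion is the ``in particular'' clause of Lemma \ref{Lm:Associator}.
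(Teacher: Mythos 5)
Your main argument is correct and is essentially the paper's own proof: the paper likewise picks $0\ne a\in W$, notes $VM_aM_a=V$ (your $Ka^2=K$), and invokes Corollary \ref{Cr:Group}(ii). (Minor slip in your optional direct sketch: with $a=-(b+c)$ and $b=c$ the factor is $I_{a+b+c}^{-1}=I_0^{-1}=1$, not $I_{-b}^{-1}$, but this does not affect the conclusion.)
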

\begin{proof}
Let $0\ne a\in W$ and note that $M_a$ is a bijection of $V$. Thus $VW^2\supseteq VM_aM_a=V$, and we are done by Corollary \ref{Cr:Group}.
\end{proof}

\subsection{Isomorphisms}

We proceed to investigate isomorphisms between loops $Q_{k<K}(W)$ for a fixed field extension $k<K$. 

Let $W_0$, $W_1$ be two $k$-subspaces of $K$ satisfying $k1\cap W_0 = 0 = k1\cap W_1$. Let
\begin{displaymath}
    S(W_0,W_1) = \{A:\text{$A$ is an additive bijection $K\to K$ and }A^{-1}M_{W_0}A = M_{W_1}\}.
\end{displaymath}
Any $A\in S(W_0,W_1)$ induces the map $\bar A:W_0\to W_1$ defined by
\begin{displaymath}
    A^{-1} M_a A = M_{a\bar A},\quad a\in W_0,
\end{displaymath}
in fact an additive bijection $W_0\to W_1$. Indeed: $\bar A$ is onto $W_1$ by definition; if $a$, $b\in W_0$ are such that $A^{-1}M_aA = A^{-1}M_bA$ then $M_a=M_b$ and $a=1M_a = 1M_b=b$, so $\bar A$ is one-to-one; and $M_{(a+b)\bar A} = A^{-1}M_{a+b}A = A^{-1}(M_a+M_b)A = A^{-1}M_aA + A^{-1}M_bA = M_{a\bar A} + M_{b\bar A}$, so $(a+b)\bar A = a\bar A + b\bar A$.

\begin{proposition}\label{Pr:Correspondence}
For $i\in\{0,1\}$, let $Q_i=Q_{k<K}(W_i)$ with $W_i\ne 0$. Suppose that $K$ is perfect if $\chr{k}=2$. Then there is a one-to-one correspondence between the set $\mathrm{Iso}(Q_0,Q_1)$ of all isomorphisms $Q_0\to Q_1$ and the set $S(W_0,W_1)\times K$. The correspondence is given by
\begin{displaymath}
    \Phi:\mathrm{Iso}(Q_0,Q_1)\to S(W_0,W_1)\times K,\quad f\Phi = (A,c),
\end{displaymath}
where $(A,c)$ are defined by
\begin{displaymath}
    (0,u)f = (0,uA)\text{ and }(a,0)f = (a\bar A, c\cdot a\bar A),
\end{displaymath}
and by the converse map
\begin{displaymath}
    \Psi: S(W_0,W_1)\times K\to \mathrm{Iso}(Q_0,Q_1),\quad (A,c)\Psi = f,
\end{displaymath}
where $f$ is defined by
\begin{equation}\label{Eq:f2}
    (a,u)f = (a\bar A, c\cdot a\bar A + uA).
\end{equation}
\end{proposition}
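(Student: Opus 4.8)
The plan is to verify that $\Phi$ and $\Psi$ are well-defined and mutually inverse. I would break the argument into four parts: (1) any $f\in\mathrm{Iso}(Q_0,Q_1)$ restricts to an isomorphism $0\times K\to 0\times K$, yielding the additive bijection $A$; (2) this $A$ lies in $S(W_0,W_1)$; (3) $f$ is then determined by the single extra datum $c\in K$, giving formula \eqref{Eq:f2} and hence well-definedness of $\Phi$; (4) conversely, every pair $(A,c)\in S(W_0,W_1)\times K$ produces an isomorphism via \eqref{Eq:f2}, so $\Psi$ is well-defined, and $\Phi\Psi$, $\Psi\Phi$ are identities.

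For part (1): since $0\times K=\Asc(Q_i)$ by Corollary \ref{Cr:Associator}, it is a characteristic subloop, so $f$ maps $0\times K$ onto $0\times K$; as these subloops are the abelian groups $(K,+)$, the restriction is an additive bijection, call it $A$, so $(0,u)f=(0,uA)$. Writing $(a,u)=(0,u)(a,0)$ — note from \eqref{Eq:Mult} that $(0,u)(a,0)=(a,uI_a)$, but more usefully $(a,u)=(a,0)\cdot(0,?)$ must be untangled; instead I would use that $(a,0)f=(a',c_a)$ for some $a'\in W_1$, $c_a\in K$, and exploit multiplicativity. For part (2), the key is the action of $f$ on the subloop $W_i\times 0\cong(W_i,+)$: since $f$ is a homomorphism and $(a,0)(b,0)=(a+b,0)$, the map $a\mapsto a'$ (first coordinate of $(a,0)f$) is an additive bijection $W_0\to W_1$. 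To see it is induced by $A$ in the sense $a'=a\bar A$, compute $((0,u)(a,0))f$ two ways: $((0,u)(a,0))f=(a,uI_a)f=(a', \text{something}\cdot a' + uI_aA$-type expression$)$, while $(0,uA)\cdot(a',c_a)=(a', uAI_{a'}+c_aJ_0)=(a',uAI_{a'}+c_a)$. Matching the part of the second coordinate that is linear in $u$ forces $uA\cdot a' = u A_{\text{composed with }M_{a'}}$ versus the image under $f$ of the $M_a$-twisted term; carefully done, this yields $A^{-1}M_aA=M_{a'}$, i.e. $A\in S(W_0,W_1)$ and $a'=a\bar A$. The leftover $c_a$ then satisfies $c_a=c\cdot a\bar A$ for a single $c$ (linearity in $a$ plus the $K$-module structure), and assembling gives \eqref{Eq:f2}.

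For part (4), I would take $(A,c)\in S(W_0,W_1)\times K$, define $f$ by \eqref{Eq:f2}, and check directly that $((a,u)(b,v))f=(a,u)f\cdot(b,v)f$ using $A^{-1}M_aA=M_{a\bar A}$, that is, $uA\cdot(a\bar A)=(u a)A$ for $u\in K$, $a\in W_0$ — this is exactly the intertwining relation read coordinatewise. The computation mirrors the proof of Lemma \ref{Lm:ParamAut}: expand both sides, use commutativity of multiplication in $K$, the additivity of $A$ and $\bar A$, and the intertwining identity to cancel terms; bijectivity of $f$ follows since $A$ and $\bar A$ are bijections and the $c\cdot a\bar A$ term only shifts. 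Finally $\Phi\Psi=\mathrm{id}$ and $\Psi\Phi=\mathrm{id}$ are immediate from comparing the defining formulas.

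The main obstacle is part (2): proving that the additive bijection $A$ arising on $0\times K$ actually normalizes $M_{W_0}$ to $M_{W_1}$. This is where the hypothesis that $K$ is perfect when $\chr{k}=2$ must enter. The subtlety is that matching second coordinates in $((0,u)(a,0))f = (0,u)f\cdot(a,0)f$ produces an identity of the form $uA\cdot a\bar A + (\text{terms from }J,I) = (\text{image of }u I_a\text{-type term})A + \dots$; isolating the genuinely bilinear-in-$(u,a)$ part and concluding $A^{-1}M_aA = M_{a\bar A}$ requires separating a square term $uaA$ from a "doubled" term like $2uaA$ or $u a^2$-type contributions, and in characteristic $2$ a squaring map intervenes that is only invertible (hence allows the needed cancellation/identification) when $K$ is perfect. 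I would handle the cases $\chr{k}\ne 2$ and $\chr{k}=2$ separately, using in the latter that every element of $K$ has a unique square root to invert the Frobenius-type obstruction.
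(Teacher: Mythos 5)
Your overall architecture coincides with the paper's: $0\times K$ is characteristic because it is the associator subloop (Corollary \ref{Cr:Associator}); the direction $\Psi$ is a direct verification using the intertwining relations $AI_{b\bar A}=I_bA$, $AJ_{a\bar A}=J_aA$ coming from $A^{-1}M_aA=M_{a\bar A}$; and checking $\Phi\Psi=\mathrm{id}$, $\Psi\Phi=\mathrm{id}$ is routine once \eqref{Eq:f2} is established. Moreover, when $\chr{k}\ne 2$ your plan for the hard step does go through, and is even slightly leaner than the paper's: writing $(a,0)f=(a',c_a)$ and comparing the images of $(0,u)(a,0)=(a,uI_a)$ and $(a,0)(0,v)=(a,vJ_a)$ yields $I_a^{-1}AI_{a'}=J_a^{-1}AJ_{a'}$, which expands to $2M_aA=2AM_{a'}$ and hence to $A^{-1}M_aA=M_{a'}$; the paper instead compares general products $(a,u)(b,v)$ and combines two auxiliary operator identities to reach the same $2M_aA=2AM_{a'}$.

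The genuine gap is the characteristic $2$ case, which you defer to ``perfectness inverts the Frobenius-type obstruction'' without an argument --- and the specific computation you propose cannot even get started there: when $\chr{k}=2$ one has $I_a=J_a$, so the identity extracted from mixed products is vacuous (exactly the factor $2$ you anticipate). Some comparison involving products of two elements with \emph{nonzero} $W$-components is indispensable; the paper sets $a=b$ in its auxiliary identity to obtain $I_a^2A=AI_{a'}^2$, i.e.\ $A^{-1}M_{a^2}A=M_{(a')^2}$, and only then invokes perfectness --- and even then one must still argue that $A^{-1}M_aA$ is itself a multiplication operator $M_e$ and identify $e=a'$ (the paper does this by squaring and evaluating at $1$). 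A second, smaller unjustified step is your claim that $c_a=c\cdot a\bar A$ follows from ``linearity in $a$'': the map $a\mapsto c_a$ is not additive a priori; one only has the twisted relation $c_{a+b}=c_a+c_a\cdot b'+c_b-c_b\cdot a'$, whose symmetrization gives $2(c_a\cdot b')=2(c_b\cdot a')$ --- sufficient for $\chr{k}\ne 2$, but in characteristic $2$ the paper needs an extra manipulation (expanding $((a+b)+a)C$) and a cancellation of $a'\ne 0$ to reach $c_a\cdot b'=c_b\cdot a'$. As written, your proposal establishes the proposition only when $\chr{k}\ne 2$.
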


\begin{proof}
Given $A\in S(W_0,W_1)$ and $c\in K$, let $f:Q_0\to Q_1$ be defined by \eqref{Eq:f2}. It is not difficult to see that $f$ is a bijection. We claim that $f$ is a homomorphism. Indeed, $\bar A$ is additive, we have
\begin{align*}
    (a,u)f\cdot (b,v)f &= (a\bar A,c\cdot a\bar A + uA)(b\bar A, c\cdot b\bar A + vA)\\
        &= (a\bar A + b\bar A, (c\cdot a\bar A + uA)I_{b\bar A} + (c\cdot b\bar A + vA)J_{a\bar A})
 \end{align*}
and
\begin{displaymath}
    ((a,u)(b,v))f = (a+b,uI_b+vJ_a)f = ((a+b)\bar A, c\cdot (a+b)\bar A + (uI_b+vJ_a)A),
\end{displaymath}
so it remains to show $AI_{b\bar A} = I_bA$ and $AJ_{a\bar A}=J_aA$ for every $a$, $b\in W_0$. This follows from $A^{-1}M_aA = M_{a\bar A}$, and we conclude that $\Psi$ is well-defined.

Conversely, let $f:Q_0\to Q_1$ be an isomorphism. Corollary \ref{Cr:Associator} gives $\Asc(Q_0) = 0\times K = \Asc(Q_1)$, and so $(0\times K)f = 0\times K$. Hence there is a bijection $A:K\to K$ such that $(0,u)f=(0,uA)$ for every $u\in K$. Then $(0,uA+vA) = (0,uA)(0,vA) = (0,u)f(0,v)f = ((0,u)(0,v))f = (0,u+v)f=(0,(u+v)A)$ shows that $A$ is additive.

Let $B:W_0\to W_1$, $C:W_0\to K$ be such that $(a,0)f = (aB,aC)$ for every $a\in W_0$. Note that $(0,0)f=(0,0)$ implies $0B=0=0C$. Because $(a,u) = (a,0)(0,uJ_a^{-1})$, we must have
\begin{equation}\label{Eq:f}
    (a,u)f =  (a,0)f\cdot (0,uJ_a^{-1})f = (aB,aC)(0,uJ_a^{-1}A) = (aB, aC+uJ_a^{-1}AJ_{aB}).
\end{equation}
This proves that $B$ is onto $W_1$. Since
\begin{align*}
    ((a+b)B,(a+b)C) &= (a+b,0)f = ((a,0)(b,0))f = (a,0)f\cdot (b,0)f\\
        &= (aB,aC)(bB,bC) = (aB+bB,aCI_{bB} + bCJ_{aB}),
\end{align*}
$B$ is additive. To show that $B$ is one-to-one, suppose that $aB=bB$. Then $(a-b)B=0$ by additivity, and $a=b$ follows from the fact that $(0,K)f=(0,K)$.

We also deduce from the above equality that
\begin{equation}\label{Eq:AddC}
    (a+b)C = aC+aC\cdot bB + bC - bC\cdot aB.
\end{equation}
Using \eqref{Eq:AddC} and $(a+b)C=(b+a)C$, we obtain $2(aC\cdot bB) = 2(bC\cdot aB)$. If $\chr{k}\ne 2$, we deduce
\begin{equation}\label{Eq:CBBC}
    aC\cdot bB = bC\cdot aB.
\end{equation}
If $\chr{k}=2$, we can use \eqref{Eq:AddC} repeatedly to get
\begin{align*}
    bC &= ((a+b)+a)C = (a+b)C + aC + (a+b)C\cdot aB + aC\cdot (a+b)B\\
        &= (aC+bC+aC\cdot bB+bC\cdot aB) + aC + (aC+bC+aC\cdot bB+bC\cdot aB)\cdot aB\\
        &\quad + aC\cdot aB + aC\cdot bB\\
        &= bC + aC\cdot bB\cdot aB + bC\cdot aB\cdot aB.
\end{align*}
Hence $aC\cdot bB\cdot aB = bC\cdot aB\cdot aB$. When $a\ne 0$, we can cancel $aB\ne 0$ and deduce \eqref{Eq:CBBC}. When $a=0$, \eqref{Eq:CBBC} holds thanks to $0B=0=0C$.

Therefore, in either characteristic, we can fix an arbitrary $0\ne b\in W_0$ and obtain from \eqref{Eq:CBBC} the equality $aC = ((bB)^{-1}\cdot bC)\cdot aB$ for every $a\in W_0$. Hence $aC = c\cdot aB$ for some (unique) $c\in K$.

We proceed to show that
\begin{equation}\label{Eq:Conj}
    A^{-1}M_aA = M_{aB}
\end{equation}
for every $a\in W_0$. By \eqref{Eq:f},
\begin{align*}
    (a,u)f\cdot (b,v)f &= (aB,aC+uJ_a^{-1}AJ_{aB})(bB, bC + vJ_b^{-1}AJ_{bB})\\
     &= (aB+bB, (aC+uJ_a^{-1}AJ_{aB})I_{bB} + (bC+vJ_b^{-1}AJ_{bB})J_{aB})
\end{align*}
is equal to
\begin{displaymath}
    ((a,u)(b,v))f = (a+b,uI_b+vJ_a)f = ((a+b)B, (a+b)C + (uI_b+vJ_a)J_{a+b}^{-1}AJ_{(a+b)B}).
\end{displaymath}
Thus
\begin{displaymath}
    (a+b)C + (uI_b+vJ_a)J_{a+b}^{-1}AJ_{(a+b)B} = (aC + uJ_a^{-1}AJ_{aB})I_{bB} + (bC + vJ_b^{-1}AJ_{bB})J_{aB}.
\end{displaymath}
Since $(a+b)C = aCI_{bB}+bCJ_{aB}$ by \eqref{Eq:AddC}, the last equality simplifies to
\begin{displaymath}
    (uI_b+vJ_a)J_{a+b}^{-1}AJ_{(a+b)B} = uJ_a^{-1}AJ_{aB}I_{bB} + vJ_b^{-1}AJ_{bB}J_{aB}.
\end{displaymath}
With $v=0$ we obtain the equality of maps $K\to K$
\begin{equation}\label{Eq:AuxMps1}
    I_bJ_{a+b}^{-1}AJ_{(a+b)B} = J_a^{-1}AJ_{aB}I_{bB}.
\end{equation}
Similarly, with $u=0$ we deduce another equality of maps $K\to K$, namely
\begin{equation}\label{Eq:AuxMps2}
    J_aJ_{a+b}^{-1}AJ_{(a+b)B} = J_b^{-1}AJ_{bB}J_{aB}.
\end{equation}
Using both \eqref{Eq:AuxMps1} and \eqref{Eq:AuxMps2}, we see that
\begin{displaymath}
    I_b^{-1}J_a^{-1}AJ_{aB}I_{bB} = J_{a+b}^{-1}AJ_{(a+b)B} = J_a^{-1}J_b^{-1}AJ_{bB}J_{aB},
\end{displaymath}
and upon commuting certain maps and canceling we get $I_b^{-1}AI_{bB} = J_b^{-1}AJ_{bB}$, and therefore also $J_bAI_{bB} = I_bAJ_{bB}$. Upon expanding and canceling like terms, we get $2M_bA=2AM_{bB}$. If $\chr{k}\ne 2$, we deduce $M_bA=AM_{bB}$ and \eqref{Eq:Conj}. Suppose that $\chr{k}=2$. Then $\eqref{Eq:AuxMps1}$ with $a=b$ yields $I_bA = I_b^{-1}AI_{bB}I_{bB}$, so $I_b^2A = AI_{bB}^2$. Since $M_b^2 = M_{b^2}$ and $I_b^2=I_{b^2}$, we get $I_{b^2}A = AI_{(bB)^2}$, $M_{b^2}A = AM_{(bB)^2}$ and $A^{-1}M_{b^2} A = M_{(bB)^2}$. Since $K$ is perfect (this is the only time we use this assumption), the last equality shows that every $A^{-1}M_dA$ is of the form $M_e$, so, in particular, $A^{-1}M_bA = M_e$ for some $e$. Then $M_e^2 = (A^{-1}M_bA)^2 = A^{-1}M_b^2A = M_{bB}^2$, and evaluating this equality at $1$ yields $e^2=(bB)^2$ and $e=bB$. We have again established \eqref{Eq:Conj}.

Since $A:K\to K$ is an additive bijection, \eqref{Eq:Conj} holds and $\im(B)=W_1$, it follows that $A\in S(W_0,W_1)$ and $B=\bar A:W_0\to W_1$. We therefore have $(a,0)f = (a\bar A,c\cdot a\bar A)$, and $\Phi$ is well-defined by $(A,c)=f\Phi$.

It remains to show that $\Phi$ and $\Psi$ are mutual inverses. If $f\in\mathrm{Iso}(Q_0,Q_1)$ and $f\Phi = (A,c)$, then \eqref{Eq:Conj} yields $J_a^{-1}AJ_{aB} = A$. This means that \eqref{Eq:f} can be rewritten as \eqref{Eq:f2}, and thus $f\Phi\Psi = f$. Conversely, suppose that $(A,c)\in S(W_0,W_1)\times K$ and let $f=(A,c)\Psi$ and $(D,d) = f\Phi = (A,c)\Psi\Phi$. Then $(0,u)f = (0,uA)$ by \eqref{Eq:f2} and $(0,u)f = (0,uD)$ by definition of $\Phi$, so $A=D$. Finally, $(a,0)f = (a\bar A, c\cdot a\bar A)$ by \eqref{Eq:f2} and $(a,0)f = (a\bar D, d\cdot a\bar D) = (a\bar A, d\cdot a\bar A)$ by definition of $\Psi$, so $c=d$.
\end{proof}

\subsection{Isomorphisms and automorphisms in the tame case}

For the rest of this section suppose that the triple $k$, $K$, $W_i$ is tame, that is, $k$ is a prime field, $\langle W_i\rangle_k = K$, and $K$ is perfect if $\chr{k}=2$. In particular, $W_i\ne 0$. Let $\GL_k(K)$ be the group of all $k$-linear transformations of $K$, and let $\Aut(K)$ be the group of all field automorphisms of $K$.

Since $k$ is prime, any additive bijection $K\to K$ is $k$-linear, and so $S(W_0,W_1)=\{A\in\GL_k(K):A^{-1}M_{W_0}A = M_{W_1}\}$. We have shown that $A\in S(W_0,W_1)$ gives rise to an additive bijection $\bar A:W_0\to W_1$. This map extends uniquely into a field automorphism $\bar A$ of $K$ such that $A^{-1}M_aA = M_{a\bar A}$ for every $a\in K$. To see this, first note that $A\in\GL_k(K)$ implies $A^{-1}M_{ab}A = A^{-1}M_aM_bA = A^{-1}M_aAA^{-1}M_bA$, $A^{-1}M_{a+b}A = A^{-1}(M_a+M_b)A = A^{-1}M_aA + A^{-1}M_bA$ and $A^{-1}M_\lambda A = M_\lambda$ for every $a$, $b\in K$ and $\lambda\in k$. If $\bar A$ is already defined on $a$, $b$, let $(a+b)\bar A = a\bar A + b\bar A$, $(ab)\bar A = a\bar A\cdot b\bar A$, and $(\lambda a)\bar A = \lambda\cdot a\bar A$, where $\lambda\in k$. This procedure defines $\bar A$ well. For instance, if $ab=c+d$, we have $a\bar A\cdot b\bar A = 1M_{a\bar A\cdot b\bar A} = 1M_{a\bar A}M_{b\bar A} = 1A^{-1}M_aAA^{-1}M_bA = 1A^{-1}M_{ab}A = 1A^{-1}M_{c+d}A = 1(A^{-1}M_cA + A^{-1}M_dA) = 1(M_{c\bar A} + M_{d\bar A}) = c\bar A + d\bar A$, and so on.

Here is a solution to the isomorphism problem for a fixed extension $k<K$:

\begin{corollary}\label{Cr:Iso}
For $i\in\{0,1\}$, let $k$, $K$, $W_i$ be a tame triple and $Q_i = Q_{k<K}(W_i)$. Then $Q_0$ is isomorphic to $Q_1$ if and only if there is $\varphi\in\Aut(K)$ such that $W_0\varphi = W_1$.
\end{corollary}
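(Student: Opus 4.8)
The plan is to deduce Corollary~\ref{Cr:Iso} directly from Proposition~\ref{Pr:Correspondence} together with the observations about the tame case recorded just above the statement. The content of Proposition~\ref{Pr:Correspondence} is that $Q_0\cong Q_1$ if and only if $S(W_0,W_1)\times K\ne\emptyset$, i.e. if and only if $S(W_0,W_1)\ne\emptyset$; so the whole task is to show that, under the tameness hypotheses, $S(W_0,W_1)\ne\emptyset$ is equivalent to the existence of $\varphi\in\Aut(K)$ with $W_0\varphi=W_1$.

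For the forward direction, suppose $A\in S(W_0,W_1)$. Since $k$ is prime, $A$ is automatically $k$-linear, and the paragraph preceding the corollary shows that the induced additive bijection $\bar A\colon W_0\to W_1$ extends uniquely to a field automorphism $\bar A\in\Aut(K)$ satisfying $A^{-1}M_aA=M_{a\bar A}$ for all $a\in K$. In particular $\bar A$ restricts to a bijection $W_0\to W_1$ (this restriction is precisely the original $\bar A$), so $\varphi=\bar A$ witnesses $W_0\varphi=W_1$. For the converse, suppose $\varphi\in\Aut(K)$ with $W_0\varphi=W_1$. The natural candidate is to take $A=\varphi$ itself, viewed as a $k$-linear bijection of $K$; then for $a\in K$ and $b\in K$ one checks $bA^{-1}M_aA=(b\varphi^{-1})a)\varphi=b\cdot(a\varphi)=bM_{a\varphi}$, i.e. $A^{-1}M_aA=M_{a\varphi}$, so $A^{-1}M_{W_0}A=M_{W_0\varphi}=M_{W_1}$ and hence $A\in S(W_0,W_1)$. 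Thus $S(W_0,W_1)\ne\emptyset$, and Proposition~\ref{Pr:Correspondence} gives $Q_0\cong Q_1$.

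I do not anticipate a serious obstacle here: the corollary is essentially a repackaging of Proposition~\ref{Pr:Correspondence}, with the only real work being the passage between the ``conjugation data'' $A\in S(W_0,W_1)$ and the ``field automorphism'' $\varphi$, and that passage has already been carried out in the discussion immediately preceding the statement. The one point that needs a word of care is that tameness is being used in two places: primeness of $k$ guarantees every additive bijection is $k$-linear (so $S(W_0,W_1)$ has the clean description as a subset of $\GL_k(K)$), and the generation hypothesis $\langle W_i\rangle_k=K$ together with perfectness in characteristic $2$ were already invoked inside Proposition~\ref{Pr:Correspondence} and in the extension argument for $\bar A$; so I would simply remark that the hypotheses needed have all been verified and then give the two short directions above. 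The proof is therefore only a few lines.
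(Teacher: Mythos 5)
Your proposal is correct and follows essentially the same route as the paper's own proof: the forward direction extracts $A\in S(W_0,W_1)$ from Proposition~\ref{Pr:Correspondence} and uses the extension of $\bar A$ to an element of $\Aut(K)$ established just before the corollary, while the converse verifies $\varphi\in S(W_0,W_1)$ by the same direct computation $b\varphi^{-1}M_a\varphi = bM_{a\varphi}$ and invokes the correspondence again. Only cosmetic issues remain (a stray parenthesis in that computation, and $a$ should range over $W_0$ when concluding $A^{-1}M_{W_0}A=M_{W_1}$), neither of which affects the argument.
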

\begin{proof}
Suppose that $f:Q_0\to Q_1$ is an isomorphism. By Proposition \ref{Pr:Correspondence}, $f$ induces a map $A\in S(W_0,W_1)$, which gives rise to $\bar A:W_0\to W_1$, which extends into $\bar A\in\Aut(K)$ such that $W_0\bar A = W_1$.

Conversely, suppose that $\varphi\in\Aut(K)$ satisfies $W_0\varphi = W_1$. Then for every $a\in W_0$ and $b\in K$ we have $b\varphi^{-1}M_a\varphi = ((b\varphi^{-1})\cdot a)\varphi = b\varphi^{-1}\varphi\cdot a\varphi = b\cdot a\varphi = bM_{a\varphi}$, so $\varphi\in S(W_0,W_1)$. The set $S(W_0,W_1)\times K$ is therefore nonempty, and we are done by Proposition \ref{Pr:Correspondence}.
\end{proof}

We proceed to describe the automorphism groups of tame loops $Q_{k<K}(W)$. Let $S(W) = S(W,W) = \{A\in\GL_k(K):A^{-1}M_WA=M_W\}$.

\begin{lemma}\label{Lm:Action}
Suppose that $k$, $K$, $W$ is a tame triple. Then the mapping $S(W)\to\Aut(K)$, $A\mapsto \bar A$ is a homomorphism with kernel $N(W) = M_{K^*}$ and image $I(W) = \{C\in\Aut(K):WC=W\}$. Moreover, $S(W) = I(W)N(W)$ is isomorphic to the semidirect product $I(W)\ltimes K^*$ with multiplication $(A,c)(B,d) = (A,c\bar B\cdot d)$.
\end{lemma}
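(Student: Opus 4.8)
The plan is to verify each of the four assertions in turn, leaning on the structural facts already established just before the statement: that any $A\in S(W)$ extends to a field automorphism $\bar A\in\Aut(K)$ satisfying $A^{-1}M_aA=M_{a\bar A}$ for \emph{all} $a\in K$, not merely $a\in W$. First I would check that $A\mapsto\bar A$ is a homomorphism. Given $A$, $B\in S(W)$, for every $a\in K$ one has $(AB)^{-1}M_a(AB) = B^{-1}(A^{-1}M_aA)B = B^{-1}M_{a\bar A}B = M_{a\bar A\bar B}$, and since the extension of an element of $S(W)$ to $\Aut(K)$ is unique, $\overline{AB}=\bar A\bar B$. Next, the kernel: $A\in\ker$ iff $A^{-1}M_aA=M_a$ for all $a\in K$, i.e.\ $A$ commutes with every $M_a$, i.e.\ $A\in C_E(M_K)$. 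Since $K$ is a field, $C_{\End_k(K)}(M_K)=M_K$ (a $k$-linear map commuting with all multiplications $M_a$ is itself multiplication by its value at $1$), and invertibility of $A$ forces $A\in M_{K^*}$; conversely each $M_c$ with $c\in K^*$ lies in $S(W)$ and has trivial $\bar{}\,$, as the earlier computation $b M_c^{-1}M_a M_c = b\cdot a$ shows. So $\ker = N(W)=M_{K^*}$.

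For the image I would argue both inclusions. If $A\in S(W)$ then by definition $M_W = A^{-1}M_WA = M_{W\bar A}$, and evaluating at $1$ gives $W\bar A = W$, so $\bar A\in I(W)$. Conversely, if $C\in\Aut(K)$ satisfies $WC=W$, then exactly as in the proof of Corollary \ref{Cr:Iso} (the computation $b\,C^{-1}M_aC = (bC^{-1}\cdot a)C = b\cdot aC = bM_{aC}$) we get $C\in S(W)$ with $\bar C = C$, so $C$ is in the image. Hence the image is precisely $I(W)=\{C\in\Aut(K):WC=W\}$.

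Finally, for the decomposition $S(W)=I(W)N(W)$ and the semidirect product structure: the surjection $A\mapsto\bar A$ onto $I(W)$ with kernel $N(W)$ already gives a short exact sequence $1\to M_{K^*}\to S(W)\to I(W)\to 1$, and it splits because $I(W)\subseteq S(W)$ (each $C\in I(W)$ is its own lift, with $\bar C=C$), so $I(W)$ is a complement to $N(W)$; thus $S(W)=I(W)N(W)=I(W)\ltimes N(W)$. To get the stated multiplication rule, I would identify $S(W)$ with $I(W)\times K^*$ via $A\mapsto(\bar A,\,1A)$ — note that for $A=CM_c$ with $C\in I(W)$, $c\in K^*$ one has $\bar A=C$ and $1A=1M_c=c$, so this map is the bijection inverse to $(C,c)\mapsto CM_c$. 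Then for $A=CM_c$ and $B=DM_d$ one computes $AB = CM_cDM_d$; rewriting $M_cD = D\,D^{-1}M_cD = D\,M_{cD} = D\,M_{c\bar D}$ (using that $D\in S(W)$ with $\bar D = D$), we get $AB = CD\,M_{c\bar D}M_d = (CD)M_{(c\bar D)d}$, which under the identification is $(C\bar D,\; c\bar D\cdot d) = (CD,\,c\bar B\cdot d)$, as claimed. The only point requiring a little care is the computation of $C_{\End_k(K)}(M_K)$ for the kernel and this conjugation-rewriting for the multiplication rule; both are short, so there is no serious obstacle — the main thing is to keep the left-action conventions ($bM_a=ba$, maps acting on the right) straight throughout.
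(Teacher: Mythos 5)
Your proposal is correct and follows essentially the same route as the paper: the same conjugation computation for the homomorphism property, the same evaluation-at-$1$ argument for the kernel, the same two inclusions for the image, and the same identity $M_c\bar B=\bar B M_{c\bar B}$ for the multiplication rule. Your split-exact-sequence phrasing of $S(W)=I(W)N(W)$ is just an abstract restatement of the paper's explicit factorization $A=\bar A\cdot\bigl((\bar A)^{-1}A\bigr)$, so there is no substantive difference.
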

\begin{proof}
With $A$, $B\in S(W)$ and $a\in K$ we have $M_{a\overline{AB}} = (AB)^{-1}M_a(AB) = B^{-1}A^{-1}M_a AB = B^{-1}M_{a\bar A}B = M_{a\bar A\bar B}$, so $\overline{AB} = \bar A\bar B$. The kernel of this homomorphism is equal to $N(W) = \{A\in S(W):M_aA = AM_a$ for every $a\in K\}$. If $A\in N(W)$, we can apply the defining equality to $1$ and deduce $aA = (1A)a$, so $A=M_{1A}\in M_{K^*}$. Conversely, if $M_b\in M_{K^*}$ then obviously $M_b\in N(W)$.

For the image, note that $\bar A$ satisfies $W\bar A = W$. We have seen above that $\bar A\in \Aut(K)$. Conversely, if $C\in\Aut(K)$ satisfies $WC=W$ then $C\in S(W)$, and $C^{-1}M_aC = M_{aC}$ for every $a\in K$ because $C$ is multiplicative. Thus $C=\bar C\in I(W)$.

Since $I(W)$, $N(W)$ are subsets of $S(W)$, we have $I(W)N(W)\subseteq S(W)$. To show that $S(W)\subseteq I(W)N(W)$, let $A\in S(W)$ and consider $D=(\bar A)^{-1}A\in S(W)$. Then $D^{-1}M_{a\bar A}D = A^{-1}\bar AM_{a\bar A}(\bar A)^{-1}A = A^{-1}M_a A = M_{a\bar A}$ shows that $D\in N(W)$. Then $A = \bar AD$ is the desired decomposition.

Let $A$, $B\in S(W) = I(W)N(W) = I(W)M_{K^*}$, where $A=\bar A M_c$, $B = \bar B M_d$ for some $c$, $d\in K^*$. Then $AB = \bar A M_c \bar B M_d = \bar A\bar B M_{c\bar B}M_d = \overline{AB} M_{c\bar B\cdot d}$.
\end{proof}

\begin{theorem}\label{Th:Aut}
Let $Q=Q_{k<K}(W)$, where $k$ is a prime field, $k<K$ is a field extension, $W$ is a $k$-subspace of $K$ such that $k1\cap W=0$, and $\langle W\rangle_k=K$. If $\chr{k}=2$, suppose also that $K$ is a perfect field. Then the group $\Aut(Q)$ is isomorphic to the semidirect product $S(W)\ltimes K$ with multiplication $(A,c)(B,d) = (AB, cB+d)$.
\end{theorem}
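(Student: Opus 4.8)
The plan is to deduce Theorem \ref{Th:Aut} from Proposition \ref{Pr:Correspondence} applied with $W_0 = W_1 = W$, upgrading the bijection of sets furnished there to an isomorphism of groups. The hypotheses of the theorem include everything needed for that proposition (in particular $W \neq 0$, and $K$ perfect when $\chr{k} = 2$), so Proposition \ref{Pr:Correspondence} gives a bijection
\begin{displaymath}
\Psi : S(W) \times K \to \mathrm{Iso}(Q, Q) = \Aut(Q), \qquad (A, c)\Psi = f,
\end{displaymath}
where $(a, u)f = (a\bar A, c \cdot a\bar A + uA)$. I also recall two facts established above: for each $A \in S(W)$ the additive bijection $\bar A : W \to W$ extends uniquely to a field automorphism of $K$ with $A^{-1}M_aA = M_{a\bar A}$ for \emph{all} $a \in K$ (the discussion before Corollary \ref{Cr:Iso}), and $\overline{AB} = \bar A\bar B$ (Lemma \ref{Lm:Action}). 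It therefore remains only to check that $\Psi$ carries the operation $(A, c)(B, d) = (AB, cB + d)$ on $S(W) \times K$ to composition of automorphisms of $Q$. Since $\Psi$ is already a bijection, and since $(S(W) \times K, \cdot)$ with this operation is indeed a group --- namely the semidirect product $S(W) \ltimes K$ for the natural right action of $S(W) \le \GL_k(K)$ on $(K, +)$, with identity $(1, 0)$ and $(A, c)^{-1} = (A^{-1}, -cA^{-1})$ --- this will make $\Psi$ an isomorphism and prove the theorem.

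The verification is a direct computation, organized as follows. Fix $(A, c), (B, d) \in S(W) \times K$, put $f = (A, c)\Psi$ and $g = (B, d)\Psi$, and evaluate the composite $fg$ (functions act on the right throughout, so $(a, u)fg = ((a, u)f)g$). Writing $a' = a\bar A$, one gets
\begin{displaymath}
(a, u)fg = (a', \, c \cdot a' + uA)g = \bigl(a'\bar B, \; d \cdot a'\bar B + (c \cdot a' + uA)B\bigr),
\end{displaymath}
where $a'\bar B = a\bar A\bar B = a\overline{AB}$. The key identity is the rewriting of the relation $B^{-1}M_yB = M_{y\bar B}$, valid for \emph{all} $y \in K$ (this is where tameness is genuinely used), as $(xy)B = (xB)(y\bar B)$ for all $x, y \in K$; taking $x = c$ and $y = a'$ gives $(c \cdot a')B = (cB)(a'\bar B) = (cB)(a\overline{AB})$. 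Substituting, the second coordinate of $(a, u)fg$ equals
\begin{displaymath}
d \cdot a\overline{AB} + (cB)(a\overline{AB}) + u(AB) = (cB + d) \cdot a\overline{AB} + u(AB),
\end{displaymath}
which, together with the first coordinate $a\overline{AB}$, is exactly $(a, u)\bigl((AB, cB + d)\Psi\bigr)$ by the defining formula for $\Psi$. Hence $fg = (AB, cB + d)\Psi = \bigl((A, c)(B, d)\bigr)\Psi$, as desired.

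I expect the main difficulty to be bookkeeping rather than conceptual content. Care is needed with the composition convention, so that the factor $cB$ (rather than some other conjugate) appears in the second coordinate; with the reading of $c \cdot a\bar A$ as a product inside $K$, so that the interaction $M_cB = BM_{c\bar B}$ is available; and, above all, with the step that replaces $\bar B$ on $W$ by the field automorphism of $K$ extending it --- this is the single place where $k$ prime, $\langle W \rangle_k = K$, and perfectness of $K$ in characteristic $2$ are used, and it is what forces the tameness hypotheses into the statement. Afterwards one could, if desired, combine the result with the decomposition $S(W) \cong I(W) \ltimes K^*$ of Lemma \ref{Lm:Action} to present $\Aut(Q)$ as $(I(W) \ltimes K^*) \ltimes K$, but that refinement is not needed here.
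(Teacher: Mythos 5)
Your proposal is correct and follows essentially the same route as the paper: apply Proposition \ref{Pr:Correspondence} with $W_0=W_1=W$, then check multiplicativity of the correspondence by the computation $(a,u)fg=(a\bar A\bar B,\,d\cdot a\bar A\bar B+(c\cdot a\bar A+uA)B)$ together with $\overline{AB}=\bar A\bar B$ and $(c\cdot a\bar A)B=cB\cdot a\bar A\bar B$ from $B^{-1}M_{a\bar A}B=M_{a\bar A\bar B}$. (One minor remark: since $a\bar A\in W$, this last identity already follows from the defining property of $\bar B$ on $W$, so the extension of $\bar B$ to a field automorphism of $K$ is not strictly needed at that step.)
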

\begin{proof}
By Proposition \ref{Pr:Correspondence}, there is a one-to-one correspondence between the sets $\Aut(Q)$ and $S(W)\times K$. Suppose that $f\Phi = (A,c)$, $g\Phi = (B,d)$, so that $(a,u)f = (a\bar A, c\cdot a\bar A + uA)$ and $(a,u)g = (a\bar B, d\cdot a\bar B + uB)$ for every $(a,u)\in W\times K$. Then
\begin{displaymath}
    (a,u)fg = (a\bar A, c\cdot a\bar A + uA)g = (a\bar A\bar B,d\cdot a\bar A\bar B + (c\cdot a\bar A+uA)B).
\end{displaymath}
We want to prove that $(fg)\Phi = (AB, cB+d)$, which is equivalent to proving
\begin{displaymath}
    (a,u)fg = (a\overline{AB}, (cB+d)\cdot a\overline{AB} + uAB).
\end{displaymath}
Keeping $\bar A\bar B = \overline{AB}$ of Lemma \ref{Lm:Action} in mind, it remains to show that $(c\cdot a\bar A)B = cB\cdot a\bar A\bar B$, but this follows from $B^{-1}M_{a\bar A}B = M_{a\bar A\bar B}$.
\end{proof}

A finer structure of $\Aut(Q_{k<K}(W))$ is obtained by combining Theorem \ref{Th:Aut} with Lemma \ref{Lm:Action}.

\section{Automorphic loops of order $p^3$}\label{Sc:p3}

The following facts are known about automorphic loops of odd order and prime power order.

Automorphic loops of odd order are solvable \cite[Theorem 6.6]{KiKuPhVo}. Every automorphic loop of prime order $p$ is a group \cite[Corollary 4.12]{KiKuPhVo}. More generally, every automorphic loop of order $p^2$ is a group, by \cite{Cs} or \cite[Theorem 6.1]{KiKuPhVo}. For every prime $p$ there are examples of automorphic loops of order $p^3$ that are not centrally nilpotent \cite{KiKuPhVo}, and hence certainly not groups.

There is a \emph{commutative} automorphic loop of order $2^3$ that is not centrally nilpotent \cite{JeKiVoConst}. By \cite[Theorem 1.1]{JeKiVoNilp}, every commutative automorphic loop of odd order $p^k$ is centrally nilpotent. For any prime $p$ there are precisely $7$ commutative automorphic loops of order $p^3$ up to isomorphism \cite[Theorem 6.4]{BaGrVo}.

We will use a special case of Corollary \ref{Cr:Iso} to construct a class of pairwise non-isomorphic automorphic loops of odd order $p^3$, for $p$ odd.

Suppose that $p$ is odd. The field $\F_{p^2}$ can be represented as $\{x+y\sqrt{d}:x$, $y\in \F_p\}$, where $d\in\F_p$ is not a square. Let $\F_p=k<K=\F_{p^2}$, and let
\begin{displaymath}
    W_0 = k\sqrt d\text{ and } W_a= k(1+a\sqrt{d})\text{ for $0\ne a\in \F_p$.}
\end{displaymath}
We see that every $W_a$ is a $1$-dimensional $k$-subspace of $K$ such that $k1\cap W_a=0$. Conversely, if $W$ is a $1$-dimensional $k$-subspace of $K$ such that $k1\cap W=0$, there is $a+b\sqrt{d}$ in $W$ with $a$, $b\in k$, $b\ne 0$. If $a=0$ then $W=W_0$. Otherwise $a^{-1}(a+b\sqrt{d}) = 1+a^{-1}b\sqrt{d}\in W$, and $W=W_{a^{-1}b}$. Hence there is a one-to-one correspondence between the elements of $k$ and $1$-dimensional $k$-subspaces $W$ of $K$ satisfying $k1\cap W=0$, given by $a\mapsto W_a$.

\begin{theorem}\label{Th:p3}
Let $p$ be a prime and $\F_p=k<K=\F_{p^2}$.
\begin{enumerate}
\item[(i)] Suppose that $p$ is odd. If $a$, $b\in k$, then the automorphic loops $Q_{k<K}(W_a)$, $Q_{k<K}(W_b)$ of order $p^3$ are isomorphic if and only if $a=\pm b$. In particular, there are $(p+1)/2$ pairwise non-isomorphic automorphic loops of order $p^3$ of the form $Q_{k<K}(W)$, where we can take $W\in \{W_a:0\le a\le (p-1)/2\}$.
\item[(ii)] Suppose that $p=2$. Then there is a unique automorphic loop of order $p^3$ of the form $Q_{k<K}(W)$ up to isomorphism.
\end{enumerate}
\end{theorem}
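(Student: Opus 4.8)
Everything will be deduced from Corollary~\ref{Cr:Iso}, so the first task is to check that the triples involved are tame. Here $k=\F_p$ is a prime field and, in part~(ii), $K=\F_4$ is a finite hence perfect field, so the only condition needing attention is $\langle W\rangle_k=K$. But in both parts $W$ is a one-dimensional $k$-subspace of $K$ with $k1\cap W=0$, so $W=kv$ for some $v\notin k1$; the subfield of $K$ generated by $v$ over $k$ therefore strictly contains $k$, and since $[K:k]=2$ is prime, this subfield must be all of $K$. Also $|Q_{k<K}(W)|=|W|\cdot|K|=p\cdot p^2=p^3$, which accounts for the stated orders. By Corollary~\ref{Cr:Iso}, $Q_{k<K}(W)\cong Q_{k<K}(W')$ if and only if some $\varphi\in\Aut(K)$ carries $W$ onto $W'$, so the whole problem reduces to computing the orbits of $\Aut(K)$ on the relevant one-dimensional subspaces. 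Because $k$ is the prime field, $\Aut(K)=\mathrm{Gal}(\F_{p^2}/\F_p)=\langle\sigma\rangle$, where $\sigma$ is the Frobenius $x\mapsto x^p$ and $\sigma^2=1$.

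For part~(i) I would compute the action of $\sigma$ on the family $\{W_a:a\in k\}$. Writing $\F_{p^2}=\{x+y\sqrt d:x,y\in\F_p\}$ with $d$ a non-square, Euler's criterion gives $(\sqrt d)^p=d^{(p-1)/2}\sqrt d=-\sqrt d$, so $\sigma$ is the conjugation $x+y\sqrt d\mapsto x-y\sqrt d$. Hence $\sigma(W_0)=W_0$ and, for $a\ne 0$, $\sigma(W_a)=k(1-a\sqrt d)=W_{-a}$. Using the one-to-one correspondence $a\mapsto W_a$ established just before the theorem, the $\sigma$-orbits on $\{W_a\}$ are exactly $\{W_0\}$ together with the pairs $\{W_a,W_{-a}\}$ for $a\ne 0$. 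Therefore $Q_{k<K}(W_a)\cong Q_{k<K}(W_b)$ if and only if $b\in\{a,-a\}$, i.e. $a=\pm b$, and the number of orbits is $1+(p-1)/2=(p+1)/2$, with representatives $W_0,W_1,\dots,W_{(p-1)/2}$.

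For part~(ii) the parametrization by $\sqrt d$ is unavailable, since $x^2-d$ is reducible over $\F_2$ for every $d$, so I would instead describe $\F_4=\F_2(\omega)$ with $\omega^2+\omega+1=0$ directly. The one-dimensional $\F_2$-subspaces of $\F_4$ are $\{0,1\}$, $\{0,\omega\}$, $\{0,\omega^2\}$, and the condition $k1\cap W=0$ excludes $\{0,1\}$, leaving $\{0,\omega\}$ and $\{0,\omega^2\}$. The Frobenius $\sigma:x\mapsto x^2$ interchanges $\omega$ and $\omega^2$, hence interchanges these two subspaces, so $\Aut(\F_4)$ acts transitively on them; by Corollary~\ref{Cr:Iso} there is a single loop $Q_{k<K}(W)$ of order $8$ up to isomorphism.

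There is no serious obstacle: once Corollary~\ref{Cr:Iso} is available the argument is a short orbit count. The only points that need care are keeping the two regimes straight — in odd characteristic the Frobenius negates $\sqrt d$ precisely because $d$ is a non-square (Euler's criterion), while in characteristic $2$ one must abandon the $\sqrt d$-model and work with $\F_4=\F_2(\omega)$ — and, as bookkeeping, invoking the already-established bijection $a\mapsto W_a$ so that equality of subspaces translates into equality of parameters.
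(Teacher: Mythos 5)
Your proposal is correct and follows essentially the same route as the paper: apply Corollary \ref{Cr:Iso} and count the orbits of $\Aut(K)=\{1,\sigma\}$ on the admissible one-dimensional subspaces, using $W_a\sigma=W_{-a}$ in the odd case and a direct inspection of $\F_4$ when $p=2$. Your extra details (verifying tameness and deriving $\sigma$ as Frobenius via Euler's criterion) are accurate but only make explicit what the paper leaves implicit.
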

\begin{proof}
(i) By Theorem \ref{Th:A}, the loops $Q_a=Q_{k<K}(W_a)$ and $Q_b = Q_{k<K}(W_b)$ are automorphic loops of order $p^3$. By Corollary \ref{Cr:Iso}, the loops $Q_a$, $Q_b$ are isomorphic if and only if there is an automorphism $\varphi$ of $K$ such that $W_a\varphi = W_b$. Let $\sigma$ be the unique nontrivial automorphism of $K$, given by $(a+b\sqrt{d})\sigma = a-b\sqrt{d}$. Then $W_a\sigma = W_{-a}$ for every $a\in k$. Therefore $Q_a$ is isomorphic to $Q_b$ if and only if $a=\pm b$. The rest follows.

Part (ii) is similar, and follows from Corollary \ref{Cr:Iso} by a direct inspection of subspaces and automorphisms of $\F_4$.
\end{proof}

We will now show how to obtain the loops of Construction \ref{Cn:JeKiVo} as a special case of Construction \ref{Cn:Extension}.

\begin{lemma}\label{Lm:Field}
Let $k$ be a field and $A\in M_2(k)\setminus kI$. Then $kI+kA$ is an anisotropic plane if and only if $kI+kA$ is a field with respect to the operations induced from $M_2(k)$.
\end{lemma}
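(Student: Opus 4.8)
The plan is to prove both implications by exploiting the fact that $P = kI + kA$ is a two-dimensional commutative subalgebra of $M_2(k)$ containing the identity (commutativity is immediate since $P$ is spanned by $I$ and $A$), so that $P$ is a field precisely when every nonzero element of $P$ is invertible in $M_2(k)$, and the inverse then automatically lies in $P$. The bridge between the two conditions will be the observation that for $X = aI + bA \in P$, the determinant $\det(X)$ is exactly the value of the (quadratic) determinant norm on $P$, so anisotropy of the plane $P$ is by definition the statement $\det(X) \ne 0$ for all $(a,b) \ne (0,0)$.

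First I would prove the forward direction. Assume $P$ is an anisotropic plane. Take any nonzero $X = aI + bA \in P$; then $\det(X) \ne 0$ by anisotropy, so $X$ is invertible in $M_2(k)$. By the Cayley--Hamilton theorem, $X$ satisfies $X^2 - \tr(X)X + \det(X)I = 0$, hence $X^{-1} = \det(X)^{-1}(\tr(X)I - X) \in kI + kX \subseteq P$. Thus $P$ is closed under inverses of its nonzero elements; since $P$ is already a commutative subring of $M_2(k)$ containing $I$, it is a field, with operations induced from $M_2(k)$.

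For the converse, assume $P = kI + kA$ is a field under the induced operations. Suppose for contradiction that $P$ is not anisotropic, so there is $(a,b) \ne (0,0)$ with $X = aI + bA$ satisfying $\det(X) = 0$. A $2\times 2$ matrix of determinant zero is singular, so $X$ is not invertible in $M_2(k)$; a fortiori it is not invertible in the subring $P$ (an inverse in $P$ would be an inverse in $M_2(k)$). But $X \ne 0$ because $A \notin kI$ forces $I$ and $A$ to be $k$-linearly independent, so $(a,b)\ne(0,0)$ gives $X \ne 0$. A field cannot contain a nonzero non-invertible element, a contradiction. Hence $P$ is anisotropic.

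I do not expect a serious obstacle here; the only points that need care are (i) verifying that the "field structure induced from $M_2(k)$" really does mean that the multiplicative inverse, if it exists, is the matrix inverse — which is automatic since the identity of the subring $P$ is $I$ (as $P$ contains $I$ and $I$ acts as identity under matrix multiplication), so a two-sided inverse of $X$ within $P$ is in particular a two-sided inverse in $M_2(k)$; and (ii) using $A \notin kI$ to guarantee $\dim_k P = 2$ and hence that nonzero coordinate vectors give nonzero matrices. The Cayley--Hamilton computation in the forward direction is the one mild calculation, and it is entirely routine.
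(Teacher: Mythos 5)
Your proof is correct and follows essentially the same route as the paper: both directions hinge on the characteristic (Cayley--Hamilton) identity $X^{2}=\tr(X)X-\det(X)I$, which in the anisotropic direction yields the explicit inverse $\det(X)^{-1}(\tr(X)I-X)\in kI+kA$, while the converse is just the observation that a field contains no nonzero singular element. The only small gloss is that closure of $kI+kA$ under multiplication (i.e.\ $A^{2}\in kI+kA$) should be stated explicitly rather than assumed in calling it a subalgebra, but this follows from the same characteristic equation, exactly as in the paper's proof.
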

\begin{proof}
Certainly $kI+kA$ is an abelian group. It is well known and easy to verify directly that every $A\in M_2(k)$ satisfies the characteristic equation
\begin{displaymath}
    A^2 = \tr(A)A -\det(A)I.
\end{displaymath}
This implies that $kI+kA$ is closed under multiplication, and it is therefore a subring of $M_2(k)$.

If $kI+kA$ is a field then every nonzero element $B\in kI+kA$ has an inverse in $kI+kA$, so $B$ is an invertible matrix and $kI+kA$ is an anisotropic plane. Conversely,
suppose that $kI+kA$ is an anisotropic plane, so that every nonzero element $B\in kI+kA$ is an invertible matrix. The characteristic equation for $B$ then implies that $B^{-1}=(\det(B)^{-1})(\tr(B)I-B)$, certainly an element of $kI+kA$, so $kI+kA$ is a field.
\end{proof}

\begin{proposition}\label{Pr:Same}
Let $k$ be a field. Let
\begin{align*}
    S &=\{Q_{k<K}(W): k<K \text{ is a quadratic field extension },\, \dim_k(W)=1,\, k1\cap W=0\},\\
    T &=\{Q_k(A):A\in M_2(k),\, kI+kA \text{ is an anisotropic plane}\}.
\end{align*}
Then, up to isomorphism, the loops of $S$ are precisely the loops of $T$.
\end{proposition}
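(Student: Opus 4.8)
The plan is to establish a bijection between the two families by matching the underlying data, then checking that the two multiplication formulas agree under that match. First I would show $T\subseteq S$ up to isomorphism. Given $A\in M_2(k)$ with $kI+kA$ an anisotropic plane, Lemma~\ref{Lm:Field} tells us $K:=kI+kA$ is a field, necessarily a quadratic extension of $k$ (it is a $2$-dimensional $k$-algebra that is a field). Inside $\End_k(K)$, multiplication by $A$ is a $k$-linear operator, and I would identify the underlying vector space $k\times k$ of $Q_k(A)$ with $K$ itself via a $k$-linear isomorphism carrying $(x,y)\mapsto$ the element of $K$ whose "coordinates'' are $(x,y)$ in a suitable basis; under this identification the operator $I+bA$ on $k\times k$ becomes multiplication by the field element $1+bA\in K$, i.e. $I_{bA}$ in the notation of Construction~\ref{Cn:Main} applied to $W=kA$. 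Then the formula $(a,u)(b,v)=(a+b,u(I+bA)+v(I-aA))$ of Construction~\ref{Cn:JeKiVo} becomes exactly $(a,u)(b,v)=(a+b,uI_{bA}+vJ_{aA})$, which is the multiplication \eqref{Eq:Mult} of $Q_{k<K}(W)$ with $W=kA$. Since $A\notin kI$ we have $k1\cap kA=0$ and $\dim_k(kA)=1$, so $Q_k(A)\cong Q_{k<K}(kA)\in S$.

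For the reverse inclusion $S\subseteq T$ up to isomorphism, take any quadratic extension $k<K$ and any $1$-dimensional $W=k\alpha$ with $k1\cap W=0$, so $\alpha\in K\setminus k1$. Then $K=k1+k\alpha$, and left multiplication by $\alpha$ on $K$, written in the $k$-basis $\{1,\alpha\}$, is a matrix $A_\alpha\in M_2(k)$ with $A_\alpha\notin kI$ (otherwise $\alpha\in k1$). Concretely, if $\alpha$ satisfies $\alpha^2=t\alpha-n$ with $t,n\in k$, then $A_\alpha=\twomatrix{0}{-n}{1}{t}$, and $kI+kA_\alpha$ is the regular representation of $K$, hence every nonzero element is invertible, i.e. $kI+kA_\alpha$ is an anisotropic plane. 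Identifying $K$ with $k\times k$ via the basis $\{1,\alpha\}$, the endomorphism $M_\alpha$ becomes $A_\alpha$ and more generally $M_{b\alpha}$ becomes $bA_\alpha$, so the multiplication \eqref{Eq:Mult} of $Q_{k<K}(k\alpha)$ turns into $(a,u)(b,v)=(a+b,u(I+bA_\alpha)+v(I-aA_\alpha))$, which is the multiplication of $Q_k(A_\alpha)$. Hence $Q_{k<K}(W)\cong Q_k(A_\alpha)\in T$.

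Combining the two inclusions gives the assertion. I expect the only real subtlety to be bookkeeping: making the linear identification of the ambient vector space with $K$ explicit and consistent, and verifying that under it the operator "$I+bA$'' (resp. "$I+bA_\alpha$'') really is "multiplication by the field element $1+b\alpha$'' (resp. the matrix of that multiplication). This is a routine change-of-basis check using the characteristic equation $A^2=\tr(A)A-\det(A)I$ from Lemma~\ref{Lm:Field}, which guarantees the $k$-algebra generated by $A$ is all of $kI+kA$ and that the regular representation of $K$ lands exactly there. No loop-theoretic difficulty arises, since once the multiplication formulas coincide on the nose, the loops are literally equal, not merely isomorphic; the word "isomorphic'' in the statement is needed only because the parametrizations ($A$ versus $\alpha$, and the choice of basis) are not canonical.
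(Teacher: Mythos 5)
Your proposal is correct and follows essentially the same route as the paper: both directions rest on Lemma~\ref{Lm:Field} to pass between anisotropic planes and quadratic field extensions, and on identifying $k\times k$ with $K$ via the regular representation (your companion matrix $A_\alpha$ is exactly the paper's $M_\theta$, up to the row-versus-column convention), after which the two multiplication formulas coincide on the nose. The only differences are presentational: you spell out the $T\subseteq S$ identification a bit more abstractly (viewing $k\times k$ as a one-dimensional $K$-module), where the paper simply notes the analogous check works, and this does not change the argument.
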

\begin{proof}
Let $Q_{k<K}(W)\in S$. Then there is $\theta\in K$ such that $W=k\theta$, $K=k(\theta)$, and $\theta^2 = e+f\theta$ for some $e$, $f\in k$. The multiplication in $K$ is determined by $(a+b\theta)(c+d\theta) = (ac+bd\theta^2) + (ad+bc)\theta$ and $\theta^2=e+f\theta$. With respect to the basis $\{1,\theta\}$ of $K$ over $k$, the multiplication by $\theta$ is given by the matrix $A=M_\theta = \binom{0\ 1}{e\ f}$. The multiplication on $kI+kA$ is then determined by $(aI+bA)(cI+dA) = (acI+bdA^2)+(ad+bc)A$ and $A^2 = -\det(A)I+\tr(A)A = eI + fA$, so $kI+kA$ is a field isomorphic to $K$. By Lemma \ref{Lm:Field}, $kI+kA$ is an anisotropic plane, and the loop $Q_k(A)$ is defined.

The multiplication in $Q_{k<K}(W)$ on $W\times V = k\theta \times (k1 + k\theta)$ is given by $(a\theta,u)(b\theta,v) = (a\theta+b\theta, u(1+b\theta)+v(1-a\theta))$, while the multiplication in $Q_k(A)=Q_k(M_\theta)$ on $k\times (k\times k)$ is given by $(a,u)(b,v) = (a+b, u(1+b\theta)+v(1-a\theta))$. This shows that $Q_{k,K}(W)$ is isomorphic to $Q_k(A)$, and $S\subseteq T$.

Conversely, if $Q_k(A)\in T$ then the anisotropic plane $K=kI+kA$ is a field by Lemma \ref{Lm:Field}, clearly a quadratic extension of $k$. Moreover, $W=kA$ is a $1$-dimensional $k$-subspace of $K$ such that $k1\cap W=0$, so $Q_{k<K}(W)\in S$. We can again show that $Q_{k<K}(W)$ is isomorphic to $Q_k(A)$.
\end{proof}

Conjecture 6.5 of \cite{JeKiVoNilp} stated that there is precisely one isomorphism type of loops $Q_{\F_2}(A)$, two isomorphism types of loops $Q_{\F_3}(A)$, and three isomorphism types of loops $Q_{\F_p}(A)$ for $p\ge 5$. The conjecture was verified computationally in \cite{JeKiVoNilp} for $p\le 5$, using the GAP package \texttt{LOOPS} \cite{LOOPS}. Since $\F_{p^2}$ is the unique quadratic extension of $\F_p$, Theorem \ref{Th:p3} and Proposition \ref{Pr:Same} now imply that the conjecture is actually false for every $p>5$. (But note that $(p+1)/2$ gives the calculated answer for $p=3$ and $p=5$, and the case $p=2$ is also in agreement.)

The full classification of automorphic loops of order $p^3$ remains open.

\section{Infinite examples}\label{Sc:Example}

We conclude the paper by constructing an infinite $2$-generated abelian-by-cyclic automorphic loop of exponent $p$ for every prime $p$.

\begin{lemma}
Let $p$ be an odd prime, $k=\F_p$, $K=\F_p((t))$ the field of formal Laurent series over $\F_p$, $W=\F_pt$, and $Q=Q_{k<K}(W)$ the automorphic loop from Construction \ref{Cn:Extension} defined by \eqref{Eq:Mult} on $W\times K = \F_pt\times \F_p((t))$. Let $L=\langle (t,0),(0,1)\rangle$ be the subloop of $Q$ generated by $(t,0)$ and $(0,1)$. Then $L=W\times U$, where $U$ is the localization of $\F_p[t]$ with respect to $\{1+a:a\in W\}$. Moreover, $L$ is an infinite nonassociative $2$-generated abelian-by-cyclic automorphic loop of exponent $p$.
\end{lemma}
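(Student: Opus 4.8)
The plan is to first identify the subloop $L = \langle (t,0),(0,1)\rangle$ explicitly, then read off its properties. I would begin by determining the second coordinates that can appear. Since $W = \F_p t$ is one-dimensional, projection onto the first coordinate sends $L$ onto $\langle t\rangle = W$ (all of $\F_p t$), so $L$ has the form $W \times U$ for some subset $U \subseteq K$ with $0 \in U$; the real content is computing $U$. From the multiplication \eqref{Eq:Mult} and the division formulas \eqref{Eq:LeftDivision}, \eqref{Eq:RightDivision}, every product or quotient of elements of $Q$ multiplies and divides second coordinates only by factors of the form $I_a = 1+a$ and $J_a = 1-a$ with $a \in W$, and adds second coordinates. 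Starting from $(t,0)$ and $(0,1)$, repeated operations therefore generate exactly the $\F_p[t]$-submodule of $K$ closed under division by elements $1+at$ (note $1-at = 1+(-a)t$ is of the same form), i.e. the localization $U$ of $\F_p[t]$ at the multiplicative set generated by $\{1+a : a \in W\}$. I would argue the two inclusions separately: $U \subseteq$ (second coordinates of $L$) by exhibiting $t^n$ and $(1+at)^{-1}$ as built from the generators via \eqref{Eq:Mult} and \eqref{Eq:RightDivision}; and (second coordinates of $L$) $\subseteq U$ because $U$ is closed under the relevant operations, so $W \times U$ is already a subloop containing both generators.

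Next I would verify the asserted properties of $L = W \times U$. That $L$ is \emph{infinite} is immediate since $U \supseteq \F_p[t]$ is infinite. That $L$ is \emph{$2$-generated} is the defining property. For \emph{nonassociative}: by Corollary \ref{Cr:Group}(i), $Q$ is a group iff $W^2 = 0$, and here $W^2 = \F_p t^2 \ne 0$; more directly, Lemma \ref{Lm:Associator} gives $[(t,0),(t,1),(t,0)] = (0, -t^2 I_{3t}^{-1})$, and $I_{3t}^{-1} = (1+3t)^{-1} \in U$, so this associator is a nonidentity element of $L$, witnessing nonassociativity. For \emph{automorphic}: $L$ is a subloop of the automorphic loop $Q = Q_{k<K}(W)$, but automorphicity is not inherited by arbitrary subloops, so I would instead observe that $L = Q_{k<K'}(W)$ is itself an instance of Construction \ref{Cn:Extension} — but this requires $U$ to be a field, which it need not be. The cleaner route: $L = W \times U$ where $U$ is a ring and $Q_{R,V}(W)$ with $R = \F_p$, $V = U$ (an $\F_p$-module), $W$ embedded in $\End_{\F_p}(U)$ by $a \mapsto M_a$ (well-defined since $U$ is closed under multiplication by $t$), and (A1), (A2) hold since $U$ is a commutative ring in which each $1+a$ is invertible by construction of the localization. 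Then Theorem \ref{Th:A} applies directly to give that $L = Q_{\F_p, U}(W)$ is automorphic. For \emph{abelian-by-cyclic}: $0 \times U$ is a normal subloop (it is the kernel of the first-coordinate homomorphism $L \to W$, as shown for $Q_{R,V}(W)$ in general), it is isomorphic to the abelian group $(U,+)$, and the quotient $L/(0\times U) \cong (W,+) = (\F_p t, +) \cong \Z/p\Z$ is cyclic.

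For \emph{exponent $p$}: since automorphic loops are power-associative (\cite{BrPa}), it suffices to show $(a,u)^p = (0,0)$ for every $(a,u) \in L$. I would compute powers of $(a,u)$ in $Q_{k<K}(W)$: using \eqref{Eq:Mult}, $(a,u)^2 = (2a, u I_a + u J_a) = (2a, 2u)$ since $I_a + J_a = 2$, and inductively I expect $(a,u)^n = (na, nu)$ — this should follow because the subloop generated by a single element is an associative abelian group (power-associativity plus the fact that $\langle (a,u)\rangle$ lies inside the associative structure; alternatively verify $(na,nu)(a,u) = ((n+1)a, nu\cdot I_a + u\cdot J_{na})$ and note $nu I_a + u J_{na} = nu + nua + u - nua = (n+1)u$). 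Hence $(a,u)^p = (pa, pu) = (0,0)$ in characteristic $p$.

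The main obstacle I anticipate is the precise identification of $U$ as the localization — specifically, the inclusion showing that $(1+at)^{-1} \in K$ actually lies in the second-coordinate set of $L$, which requires massaging the division formula \eqref{Eq:RightDivision} to isolate a pure inverse factor, and then checking that $U$ so defined is genuinely closed under the loop operations (addition, and multiplication/division by $I_a, J_a$) so that $W \times U$ is a subloop. Everything else is either a direct appeal to the general results of Sections 2–3 or a short characteristic-$p$ power computation.
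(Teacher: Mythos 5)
Your overall plan coincides with the paper's: verify that $W\times U$ is a subloop containing the two generators (the easy inclusion), prove the reverse inclusion $W\times U\subseteq L$ (the hard one), and then read off the listed properties. The problem is that the hard inclusion, which is the real content of the lemma, is never carried out: you say you would ``exhibit $t^n$ and $(1+at)^{-1}$ as built from the generators,'' but no such expressions are given, and they are not routine. The paper needs specific identities: $(0,t^m)(t,0)\cdot (t,0)^{-1}(0,t^m)=(0,2(t^m-t^{m+2}))$ together with $(-t,0)\cdot(0,1)(t,0)=(0,1+2t+t^2)$ to get $0\times\F_p[t]\subseteq L$ inductively; the identity $((a,0)\backslash(0,(1-a)^m))/(-a,0)=(0,(1-a)^{m-2})$ with descending induction to obtain \emph{all} negative powers $(0,(1+a)^{-m})$ (you mention only $(1+at)^{-1}$, and one still needs an additive span/partial-fraction step to cover all of $U$); and finally $(a,0)(0,u)\cdot(0,u\,a(1-a)^{-1})=(a,u)$ to pass from $0\times U$ to $W\times U$. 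Moreover, these manipulations use $p$ odd in an essential way (the factor $2$ must be cancelled), while your sketch never invokes oddness of $p$; since the claimed equality $L=W\times U$ is actually false for $p=2$ (the paper's next lemma exhibits $L$ as a proper subset of $W\times U$ there), no argument of the general ``closure under the relevant operations'' type you describe can succeed. This is a genuine gap, not merely omitted routine detail.

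Two smaller points. Your nonassociativity witness is miscomputed: in Lemma \ref{Lm:Associator} the second coordinate of the \emph{middle} argument does not appear, so $[(t,0),(t,1),(t,0)]=(0,0)$; and Corollary \ref{Cr:Group}(i) only shows that $Q$, not $L$, is nonassociative. A correct witness is, e.g., $[(t,0),(t,0),(0,1)]=(0,-t^2(1+2t)^{-1})\ne(0,0)$, all of whose entries lie in $L$ (this is exactly the paper's check, written as a product inequality). Also, your remark that automorphicity ``is not inherited by arbitrary subloops'' is incorrect -- automorphic loops form a variety, which is why the paper simply cites Theorem \ref{Th:A} -- though your alternative route, realizing $L$ as $Q_{\F_p,U}(M_W)$ via Construction \ref{Cn:Main} and applying Theorem \ref{Th:A} directly, is valid. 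The exponent-$p$ and abelian-by-cyclic arguments are fine and agree with the paper.
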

\begin{proof}
First we observe that $W\times U$ is a subloop of $Q$. Indeed, $W\times U$ is clearly closed under multiplication. Since $(1\pm a)^{-1}\in U$ for every $a\in W$ by definition, the formulas \eqref{Eq:LeftDivision}, \eqref{Eq:RightDivision} show that $W\times U$ is closed under left and right divisions, respectively. To prove that $L=W\times U$, it therefore suffices to show that $W\times U\subseteq L$.

We claim that $0\times \F_p[t]\subseteq L$, or, equivalently, that $(0,t^n)\in L$ for every $n\ge 0$. First note that for any integer $m$ we have
\begin{equation}\label{Eq:AuxPower}
    (0,t^m)(t,0)\cdot (t,0)^{-1}(0,t^m) = (t,t^m(1+t))(-t,t^m(1+t)) = (0,2(t^m-t^{m+2})).
\end{equation}
We have $(0,t^0)=(0,1)\in L$ by definition. The identity \eqref{Eq:AuxPower} with $m=0$ then yields $(0,2(1-t^2))\in L$, so $(0,t^2)\in L$. Since also
\begin{displaymath}
    (-t,0)\cdot (0,1)(t,0) = (-t,0)(t,1+t) = (0,1+2t+t^2)
\end{displaymath}
belongs to $L$, we conclude that $(0,t)\in L$. The identity \eqref{Eq:AuxPower} can then be used inductively to show that $(0,t^n)\in L$ for every $n\ge 0$.

We now establish $0\times U\subseteq L$ by proving that $(0,(1+a)^n)\in L$ for every $n\in\mathbb Z$ and every $a\in W=\F_p$. We have already seen this for $n\ge 0$. The identity
\begin{displaymath}
    ((a,0)\backslash (0,(1-a)^m))/(-a,0) = (-a, (1-a)^{m-1})/(-a,0) = (0,(1-a)^{m-2})
\end{displaymath}
then proves the claim by descending induction on $m$, starting with $m=1$.

Given $(a,0)\in W\times 0\subseteq L$ and $(0,u)\in 0\times U\subseteq L$, we note that $(0,u(a(1-a)^{-1}))\in L$, and thus
\begin{displaymath}
    (a,0)(0,u)\cdot(0,u(a(1-a)^{-1})) = (a,u(1-a))(0,u(a(1-a)^{-1})) = (a,u)
\end{displaymath}
is also in $L$, concluding the proof that $W\times U\subseteq L$.

The loop $L$ is certainly infinite and $2$-generated, and it is automorphic by Theorem \ref{Th:A}. The homomorphism $W\times U\to \F_p$, $(it,u)\mapsto i$ has the abelian group $(U,+)$ as its kernel and the cyclic group $(\F_p,+)$ as its image, so $L$ is abelian-by-cyclic. An easy induction yields $(a,u)^m = (ma,mu)$ for every $(a,u)\in Q$ and $m\ge 0$, proving that $L$ has exponent $p$. Finally, $(t,0)(t,0)\cdot (0,1) = (2t,1-2t)\ne (2t,1-2t+t^2) = (t,0)\cdot (t,0)(0,1)$ shows that $L$ is nonassociative.
\end{proof}

\begin{lemma}
Let $k=\F_2$, $K=\F_2((t))$ the field of formal Laurent series over $\F_2$, $W=\F_2t$, and $Q=Q_{k<K}(W)$ the automorphic loop from Construction \ref{Cn:Extension} defined by \eqref{Eq:Mult} on $W\times K = \F_2t\times \F_2((t))$. Let $L=\langle (t,0),(0,1)\rangle$ be the subloop of $Q$ generated by $(t,0)$ and $(0,1)$. Then $L= \{(it,f(1+t)^i):f\in U,i\in\{0,1\}\}$, where $U$ is the localization of $\F_2[t^2]$ with respect to $\{1+t^2\}$. Moreover, $L$ is an infinite nonassociative $2$-generated abelian-by-cyclic commutative automorphic loop of exponent $2$.
\end{lemma}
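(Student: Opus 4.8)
The plan is to mirror the structure of the odd-characteristic lemma proved just above, adapting each step to $\chr{k}=2$. First I would verify that the stated set $L_0 = \{(it,f(1+t)^i):f\in U,\ i\in\{0,1\}\}$, with $U$ the localization of $\F_2[t^2]$ at $\{1+t^2\}$, is a subloop of $Q$. Closure under multiplication: for $i=j=0$ we get $(0,f+g)$ with $f+g\in U$; for $i=0$, $j=1$ we compute $(0,f)(t,g(1+t)) = (t, f(1+t)+g(1+t)) = (t,(f+g)(1+t))$; for $i=1$, $j=0$ similarly $(t,f(1+t))(0,g) = (t, f(1+t)+g(1-t)) = (t,(f+g)(1+t))$ since $1-t=1+t$ in characteristic $2$; and for $i=j=1$, $(t,f(1+t))(t,g(1+t)) = (2t, \cdot) = (0, f(1+t)^2 + g(1+t)^2 J_t/I_t)$ — here I need to check that $(1+t)J_t/I_t = (1+t)(1+t)/(1+t) = 1+t$ is not quite right; rather $J_t = 1-t = 1+t = I_t$, so the second coordinate is $f(1+t)^2+g(1+t)^2 = (f+g)(1+t)^2 = (f+g)(1+t^2)$, and since $1+t^2\in U$ this lies in $0\times U$. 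So closure holds. Closure under left/right division follows from \eqref{Eq:LeftDivision}, \eqref{Eq:RightDivision} together with the fact that $I_t^{-1} = (1+t)^{-1}\in U$ after noting $(1+t)^2 = 1+t^2$ is a unit in $U$ and hence $1+t$ is a unit in $\F_2((t))$ with $(1+t)^{-1} = (1+t)/(1+t^2)\in U$; one checks the two formulas preserve the shape $f(1+t)^i$.

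Next I would show $L_0\subseteq L$. The key points: $(0,1)\in L$ by definition; I want $(0,t^{2n})\in L$ for all $n\ge 0$ and then $(0,g)\in L$ for all $g\in\F_2[t^2]$ by additivity, and finally $(0,g(1+t^2)^{-n})\in L$. Following the odd-prime argument but in characteristic $2$, the squaring identity $(0,t^m)(t,0)\cdot(t,0)^{-1}(0,t^m)$ becomes, using $I_t=J_t=1+t$, something like $(t,t^m(1+t))(-t,t^m(1+t)) = (t,t^m(1+t))(t,t^m(1+t)) = (0, t^{2m}(1+t)^2) = (0,t^{2m}(1+t^2))$, which only gives even powers — consistent with $U$ being built from $\F_2[t^2]$, not $\F_2[t]$. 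I would also need an analogue of the step producing $(0,t)$ in the odd case, but here that step should instead produce elements like $(0,1+t^2)$ or confirm that no odd power of $t$ enters, which matches the claimed description of $L$. Then I would lift from $0\times U$ to all of $L_0$ by using a computation analogous to $(a,0)(0,u)\cdot(0,u\cdot a(1-a)^{-1}) = (a,u)$, adapted: with $a=t$ and $J_t=I_t=1+t$, one gets $(t,u(1+t))\cdot(0,?)$ and solves for the correction term in $0\times U$ so that the product is $(t, u(1+t))$; taking $u\in U$ recovers exactly the elements $(t,f(1+t))$ with $f\in U$, i.e. the $i=1$ part of $L_0$.

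The remaining assertions are quick. $L$ is infinite and $2$-generated by inspection. It is automorphic by Theorem \ref{Th:A}. Commutativity: in characteristic $2$ the multiplication \eqref{Eq:Mult} becomes $(a,u)(b,v) = (a+b, u(1+b)+v(1+a))$, which is manifestly symmetric in $(a,u)$ and $(b,v)$ because $a,b\in W\subseteq\F_2((t))$ multiply commutatively — so all of $Q$, hence $L$, is commutative. The homomorphism $W\times U\to\F_2$, $(it,u)\mapsto i$ — restricted to $L$ via the identification of $L_0$ with $W\times U$ through $(it, f(1+t)^i)\leftrightarrow (it,f)$ — has kernel $(U,+)$ and image $(\F_2,+)$, giving abelian-by-cyclic. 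Exponent $2$: the induction $(a,u)^m=(ma,mu)$ from the previous lemma still holds, so $(a,u)^2=(0,0)$. Nonassociativity: exhibit a single failing triple, e.g. $(t,0)(t,0)\cdot(0,1)$ versus $(t,0)\cdot(t,0)(0,1)$; the former is $(0,0)(0,1)=(0,1)$ while the latter is $(t,0)(t,1+t) = (0,(1+t)^2)=(0,1+t^2)\ne(0,1)$.

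I expect the main obstacle to be bookkeeping the precise form of $U$ and the factor $(1+t)^i$: one must be careful that in characteristic $2$ the squaring operation only generates even powers of $t$, so $U$ is a localization of $\F_2[t^2]$ rather than $\F_2[t]$, and that the $i=1$ slice of $L$ is a single coset $t\times U(1+t)$ rather than $t\times U$ — verifying these closure and membership facts without sign errors (recalling $J_t=I_t$) is the delicate part, but it is entirely routine once the characteristic-$2$ simplifications are systematically applied.
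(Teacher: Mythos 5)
There is a genuine gap in the central step, the containment $\{(it,f(1+t)^i)\}\subseteq L$. Your generating identity is miscomputed: in characteristic $2$ the loop is commutative and every element squares to the identity, so $(t,t^m(1+t))(t,t^m(1+t))=(0,\,t^m(1+t)(1+t)+t^m(1+t)(1+t))=(0,0)$, not $(0,t^{2m}(1+t^2))$ --- you multiplied the second coordinates instead of adding their twisted images (note that your own nonassociativity check at the end computes exactly such a product correctly, contradicting this step). Hence the expression $(0,t^m)(t,0)\cdot(t,0)^{-1}(0,t^m)$ collapses to $(0,0)$ and produces nothing, and the follow-up sentence that some analogue ``should instead produce elements like $(0,1+t^2)$'' is a hope rather than an argument; you also give no mechanism at all for obtaining the denominators, i.e.\ the elements $(0,t^{2m}(1+t^2)^{-n})$ demanded by the localization $U$. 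The paper's proof supplies exactly what is missing here: the \emph{reassociated} product $(t,0)\cdot\left((0,u)(t,0)\right)=(t,0)(t,u(1+t))=(0,u(1+t^2))$ (the nonassociativity is precisely what makes this productive, whereas your symmetric expression is killed by exponent $2$), together with the explicit division computations $(t,0)\backslash(0,1)=(t,(1+t^2)^{-1}(1+t))$ and $(t,1+t)\backslash(t,(1+t^2)^{-1}(1+t))=(0,1+(1+t^2)^{-1})$, which yield $(0,(1+t^2)^{-1})\in L$; the same identity, read through divisions by $(t,0)$, then handles all powers $(1+t^2)^{\pm n}$ inductively. Without some such identities, the inclusion $0\times U\subseteq L$ --- the heart of the lemma --- is unproved in your proposal.

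Two smaller remarks. The parenthetical claim $(1+t)^{-1}=(1+t)(1+t^2)^{-1}\in U$ is false: $U$ consists of series in $t^2$ only, so $(1+t)^{-1}\notin U$; the correct and relevant fact is that $(1+t)^{-1}=f(1+t)$ with $f=(1+t^2)^{-1}\in U$, which is what the ``shape is preserved under division'' check actually uses, and that check should be carried out explicitly (as the paper does in four lines) rather than asserted. The rest of your outline is sound and matches the paper: the multiplication-closure computations for the set $S=(0\times U)\cup(t,0)(0\times U)$, the (in fact trivial) lift $(t,f(1+t))=(t,0)(0,f)$, commutativity and exponent $2$ from \eqref{Eq:Mult} in characteristic $2$, the abelian-by-cyclic projection onto $\F_2$ with kernel $(U,+)$, and the nonassociative triple $(t,0),(t,0),(0,1)$.
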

\begin{proof}
In our situation the multiplication formula \eqref{Eq:Mult} becomes
\begin{displaymath}
    (a,u)(b,v) = (a+b, u(1+b)+v(1+a)),
\end{displaymath}
so $Q$ is commutative and of exponent $2$. Note that \eqref{Eq:LeftDivision} becomes
\begin{displaymath}
    (a,u)\backslash (b,v) = (a+b, (v+u(1+a+b))(1+a)^{-1}).
\end{displaymath}
Let us first show that $S=\{(it,f(1+t)^i):f\in U,i\in\{0,1\}\} = (0\times U)\cup (t,0)(0\times U)$ is a subloop of $Q$. Indeed, $0\times U\subseteq S$ is a subloop, and with $f$, $g\in U$, we have
\begin{align*}
    (t,f(1+t))(t,g(1+t)) &= (0, f(1+t)^2+g(1+t)^2) = (0,(f+g)(1+t^2)),\\
    (0,f)\backslash (t,g(1+t)) &= (t, g(1+t)+f(1+t)) = (t, (g+f)(1+t)),\\
    (t,f(1+t))\backslash (0,g) &= (t, (g+f(1+t)^2)(1+t)^{-1}) = (t, (g(1+t^2)^{-1}+f)(1+t)),\\
    (t,f(1+t))\backslash (t,g(1+t)) &= (0,(g(1+t)+f(1+t))(1+t)^{-1}) = (0,g+f),
\end{align*}
always obtaining an element of $S$.

To prove that $S=L$, it suffices to show that $(0,t^{2m})$, $(0,t^{2m}(1+t^2)^{-1})\in L$ for every $m\ge 0$, since this implies $0\times U\subseteq L$ and thus $S=(0\times U)\cup (t,0)(0\times U)\subseteq L$.
We have $(0,1)\in L$ by definition, $(t,1+t) = (t,0)(0,1)\in L$, $(t,(1+t^2)^{-1}(1+t)) = (t,0)\backslash (0,1)\in L$, and $(0,1+(1+t^2)^{-1}) = (t,1+t)\backslash (t,(1+t^2)^{-1}(1+t))\in L$, so also $(0,(1+t^2)^{-1})\in L$. The inductive step follows upon observing the identity
\begin{displaymath}
    (t,0)\cdot(0,u)(t,0) =(t,0)(t,u(1+t)) = (0,u(1+t^2)).
\end{displaymath}

The loop $L$ is certainly infinite, $2$-generated, commutative, automorphic and of exponent $2$. It is abelian-by-cyclic because the map $L\to \F_2$, $(it,f(1+t)^i)\mapsto i$ is a homomorphism with the abelian group $(U,+)$ as its kernel and the cyclic group $(\F_2,+)$ as its image. Finally, $(t,0)(t,0)\cdot (0,1) = (0,1)\ne (0,1+t^2) = (t,0)\cdot (t,0)(0,1)$ shows that $L$ is nonassociative.
\end{proof}

\section*{Acknowledgment}

We thank the three anonymous referees for useful comments, particularly for pointing out a counting mistake in an earlier version of Theorem \ref{Th:p3}(ii).

\end{document}